\documentclass[11pt,oneside,reqno]{amsart}
\usepackage{palatino, mathpazo, amsmath, amsfonts, amssymb, mathrsfs,epic}
\usepackage[mathscr]{eucal}
\usepackage[all]{xy}
\usepackage{url}
\usepackage{alltt}

\usepackage{color, transparent}             
\usepackage[usenames, dvipsnames]{xcolor}	
\usepackage[colorlinks, breaklinks,
	bookmarks = false,
	pdfstartview = Fit,  
	pdfview = Fit,       
	linkcolor = Blue,
	urlcolor = Green,
	citecolor = Green,
	hyperfootnotes = false
	]{hyperref}
\usepackage{multirow}
\usepackage{stmaryrd}

 1
 1
 1

\newcommand{\ric}{\text{Ric}}

\newcommand{\bbC}{\mathbb{C}}
\newcommand{\bbN}{\mathbb{N}}
\newcommand{\bbQ}{\mathbb{Q}}
\newcommand{\bbZ}{\mathbb{Z}}
\newcommand{\bbR}{\mathbb{R}}
\newcommand{\bbH}{\mathbb{H}}
\newcommand{\bbA}{\mathbb{A}}

\newcommand{\bfH}{\mathbf{H}}
\newcommand{\bp}{\textbf{p}}

\DeclareMathOperator{\aut}{Aut}

\DeclareMathOperator{\p}{\partial}
\DeclareMathOperator{\pbar}{\overline{\partial}}

\numberwithin{equation}{section}

\theoremstyle{plain}
\newtheorem{thm}{Theorem}[section]
\newtheorem{prop}[thm]{Proposition}

\newtheorem{cor}[thm]{Corollary}
\newtheorem{lem}[thm]{Lemma}
\newtheorem{obser}[thm]{Observation}
\newtheorem{conj}[thm]{Conjecture}

\theoremstyle{definition} 
\newtheorem{defn}[thm]{Definition}

\theoremstyle{remark}
\newtheorem{remark}[thm]{Remark}

\theoremstyle{remark}
\newtheorem{example}[thm]{Example}

\linespread{1.03}

\setcounter{tocdepth}{1}

\title[]{A Hodge theoretic criterion for finite Weil--Petersson degenerations over a higher dimensional base}

\author[T.-J.~Lee]{Tsung-Ju~Lee}
\address{T.-J.~Lee: Department of Mathematics,
National Taiwan University, Taipei 10617, Taiwan}
\email{f97221051@ntu.edu.tw}

\begin{document}
\maketitle
\begin{abstract}
We give a Hodge-theoretic criterion for a Calabi--Yau variety to have finite Weil--Petersson distance on higher dimensional bases up to a set of codimension $\geq 2$. The main tool is variation of Hodge structures and variation of mixed Hodge structures. 

We also give a description on the codimension 2 locus for the moduli space of Calabi--Yau threefolds. We prove that the points lying on exactly one finite and one infinite divisor have infinite Weil--Petersson distance along angular slices. Finally, by giving a classification of the dominant term of the candidates of the Weil--Petersson potential, we prove that the points on the intersection of exact two infinite divisors have infinite distance measured by the metric induced from the dominant terms of the candidates of the Weil--Petersson potential.
\end{abstract}
\tableofcontents

\setcounter{section}{-1}

\section{Introduction}

\subsection{Motivations}
Viehweg proved that the coarse moduli space $S_h$ of polarized Calabi--Yau manifolds, with a fixed Hilbert polynomial $h$, exists and is quasi-projective \cite{V2010}. Together with Kawamata's result on the deformation invariance of canonical singularities \cite{K1999}, Viehweg's construction also leads to a quasi-projective moduli space of polarized Calabi--Yau varieties with at worst canonical singularities as a partial compactification.

Thanks to Yau's theorem on the Calabi conjecture, a canonical metric $g_{WP}$ on $S_h$, known as the \emph{Weil--Petersson metric}, is defined. This metric can be also defined by the corresponding variation of Hodge structures on $S_h$ and extended to its compactification (cf.~{\bf Section 1.3}). The classical Weil--Petersson metric on the moduli space of abelian varieties and polarized K3 surfaces are well-known to be complete. For a degeneration of Calabi--Yau manifolds $\mathfrak{X}/S$, $g_{WP}$ on the smooth locus is not necessarily complete, and it is thus natural to study its metric completion.  Indeed, in higher dimensions, C.-L.~Wang gave a Hodge-theoretic criterion for a singular Calabi--Yau to be at finite Weil--Petersson distance when it is smoothable along a \textit{one parameter family} \cite{W1997}. The moduli spaces, however, are of higher dimensions in general. He then posed the

\begin{conj} [Wang] \label{conj}
Let $\mathfrak{X}/S$ be an $n$-dimensional Calabi--Yau degeneration which is smooth outside a simple normal crossing divisor $\bigcup_i E_i$. Then $X:=\mathfrak{X}_s$, with $s \in \bigcup_i E_i$, has finite Weil--Petersson distance if and only if $N_iF^n_\infty(s)=0$ for all $i$ with $s \in E_i$. Here $F^n_\infty$ is the $n$-th piece of Schmid's limiting Hodge filtration and $N_i$ is the nilpotent part of monodromy around $E_i$.
\end{conj}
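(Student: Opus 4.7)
My strategy is to translate the conjecture into an asymptotic statement about the Hodge norm $\|\Omega_s\|^2$ of a local trivializing section $\Omega$ of $F^n$ on the smooth locus $S^\circ = S \setminus \bigcup_i E_i$, and then to attack it via the several-variable nilpotent orbit theorem of Cattani--Kaplan--Schmid. The Weil--Petersson metric is, up to a smooth Kodaira--Spencer factor, $g_{WP} = -\partial\bar{\partial}\log\|\Omega\|^2$, so finite $g_{WP}$-distance to $s \in \bigcup_i E_i$ is controlled by the singular behavior of $\log\|\Omega\|^2$ in local coordinates $(t_1,\ldots,t_k,s_{k+1},\ldots,s_d)$ in which $E_i = \{t_i = 0\}$.

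For the forward implication I would bootstrap from Wang's one-parameter theorem in \cite{W1997}. Given $s \in E_i$ at a smooth point of the boundary, cut $S$ by a holomorphic disc $\Delta \hookrightarrow S$ transverse to $E_i$ at $s$ and avoiding the other components $E_j$; the restricted VHS is a one-parameter degeneration whose limiting MHS has nilpotent operator $N_i$. If $X$ has finite WP distance in $S$ it certainly has finite WP distance along the disc, and Wang's criterion forces $N_iF^n_\infty(s) = 0$. For $s$ lying in a higher intersection $E_{i_1}\cap\cdots\cap E_{i_r}$, the same slicing argument, applied to discs transverse to a single component at a nearby smooth boundary point and then deforming, yields the vanishing of each $N_{i_j}$ on $F^n_\infty$ by the commutativity of the $N_i$'s and the continuity of the limit filtration along the boundary stratum.

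For the reverse implication one needs effective estimates on $\|\Omega\|^2$ on a punctured polydisc $(\Delta^*)^k \times \Delta^{d-k}$. The nilpotent orbit theorem supplies a factorization $F^\bullet(t,s) = \exp\!\bigl(\tfrac{1}{2\pi i}\sum_i \log t_i \cdot N_i\bigr)\tilde F^\bullet(t,s)$ with $\tilde F^\bullet$ extending across the divisor, so $\|\Omega\|^2$ can be written as a polarized pairing of a canonically extended section against itself, twisted by $\exp(\sum_i \tfrac{\log t_i}{2\pi i}N_i)$. The hypothesis $N_iF^n_\infty(s)=0$ is precisely what kills the leading logarithmic pole contributed by the $i$-th factor; a term-by-term analysis with respect to the Deligne bigrading of the limit MHS should yield $\log\|\Omega\|^2 = O\bigl(\log\log|t_i|^{-1}\bigr)$ along angular slices, which is integrable enough for the WP length to converge.

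The main obstacle is the interaction of several nilpotent operators at strata of codimension $\geq 2$. Individually the hypotheses control each $N_i$, but the relevant weight filtration is $W(\sum_i \lambda_i N_i)$ for $\lambda_i > 0$, which does not a priori coincide with any single $W(N_i)$; passing from pointwise vanishing to uniform asymptotic control along every direction of approach --- including tangential approach within $E_i$ toward $E_i \cap E_j$ --- demands the several-variable $\mathrm{SL}_2$-orbit theorem and its splittings. This is precisely why the theorem is formulated ``up to codimension $\geq 2$''; the delicate angular behavior on intersections $E_i \cap E_j$, together with the dominant-term analysis of the Weil--Petersson potential, is then isolated and treated in the later sections announced in the abstract.
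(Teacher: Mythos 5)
You have the two directions of the equivalence the wrong way round, and this is fatal to the proposal. Restricting the family to a disc transverse to $E_i$ can only \emph{increase} the Weil--Petersson distance to $s$ (the infimum is taken over a smaller class of paths), so finite WP distance in $S$ does \emph{not} imply finite WP distance along the disc; your slicing argument therefore cannot prove ``finite distance $\Rightarrow N_iF^n_\infty(s)=0$''. That implication is the hard direction: one must show that \emph{every} real curve in $S^\circ$ approaching $s$ has infinite length, which requires a uniform lower bound on the metric in a full neighborhood of $s$, not an estimate along holomorphic slices or angular slices. The paper obtains this for $1$-boundary points (Theorem \ref{Thm1-2}) by writing $\tilde{Q}(\Omega,\overline{\Omega})=p(y_1)+\bfH$ with $p(y_1)=\sum_l s_l(z_2,\dots,z_r)y_1^l$ and invoking the variation of \emph{mixed} Hodge structures along $E_i^\circ$ (Observation \ref{obv}) to guarantee that the top coefficient $s_d$ is bounded away from zero uniformly near $s$; this yields $g_{1\bar 1}\sim d/y_1^2$ with cross terms dominated by $C_j/y_1^2$, hence a length element bounded below by a multiple of $dy_1/y_1$ along an arbitrary real curve. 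Nothing in your sketch supplies such a uniform bound. Conversely, the direction on which you spend the asymptotic machinery, namely $N_iF^n_\infty(s)=0$ for all $i$ implies finite distance, is the easy one: a single holomorphic curve through $s$ meeting $S^\circ$, together with Wang's one-parameter criterion, already produces one finite-length path, and that is all that is needed since the ambient distance is at most the distance along the curve (this is exactly the proof of Theorem \ref{Thm1-1}; no $O(\log\log|t_i|^{-1})$ estimate is required, and indeed when all the $N_i$ kill $F^n_\infty$ the potential is bounded).

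Beyond the swapped directions, note that the statement is a \emph{conjecture}: the paper proves it only away from a set of codimension $\geq 2$, and in the introduction it explicitly declines the route you propose for higher-codimension strata (controlling the metric in the sectors of the several-variable $\mathrm{SL}_2$-orbit theorem of Kashiwara and Cattani--Kaplan--Schmid) because that route would require knowing that the relevant curves eventually stay in a sector, which is not known. Your final paragraph correctly identifies the difficulty at $E_i\cap E_j$ but does not resolve it, so even after repairing the directional error the proposal would establish at most the codimension-one statement that the paper already proves.
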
 

Note that the situation in the conjecture can always be reached by a sequence of blow-ups on boundaries.

Wang also showed that if the central fiber in a one parameter degeneration $ \mathfrak{X}\to \Delta$ is at finite Weil--Petersson distance, then after running a minimal model program, the central fiber $\mathfrak{X}_0'$ of the output $\mathfrak{X}'\to\Delta$ has at worst canonical singularities \cite{W1997}. This picture connects the finite distance property with the geometry of the central fiber when the base is one-dimensional. We may therefore regard Viehweg's moduli $S_h$ as \emph{charts} on the metric completion of Calabi--Yau moduli, and these charts form a \emph{covering} when $h$ varies. The goal of this note is to understand this picture over higher dimensional bases by way of studying {\bf Conjecture \ref{conj}}.

For higher dimensional case, the asymptotic behavior of a degeneration of Hodge structures were studied in \cite{K1985}, \cite{CKS1986} by means of the multivariable $SL_2$-orbit theorem. The Weil--Petersson potential and the metric are thus controlled in the \emph{sectors} (for more notations and details, see \cite{K1985}). For the curves lie in a sector, we have an estimate on the distance as in the one parameter case. {\bf Conjecture \ref{conj}} will then follow if one can show that the geodesic always lies in a sector. This point of view, which was first suggested to me by Wang, is closed related to the \emph{algebraicity} or \emph{regularity} of the geodesics towards the degenerate boundary point. However, since this is not known yet, we will not take this approach in this paper.

A differential geometric approach to connect the finiteness of the Weil--Petersson distance with Calabi--Yau variety with canonical singularities was recently developed by Takayama \cite{S2015} and Tosatti \cite{T2015}, after the fundamental work of Donaldson and Sun \cite{DS2014}. By developing the theory of Gromov--Hausdorff convergence to construct the limiting variety, the finiteness of Weil--Petersson distance along a holomorphic curve was shown to be equivalent to the uniform boundedness of diameters in the family. The case when the family is over a higher dimensional base is also not known in this approach; namely, the finiteness of Weil--Petersson distance along a real curve $\gamma$ should imply the diameter boundedness of the induced family along $\gamma$. On the other hand, the validity of {\bf Conjecture \ref{conj}} will imply this diameter boundedness statement (cf.~{\it Remark} \ref{diam}). This again gives us strong motivation to study {\bf Conjecture \ref{conj}}.

\subsection{Statements of the main results}
In this paper, as the first step to this project, we give an analogous Hodge-theoretic criterion for a Calabi--Yau variety to have finite Weil--Petersson distance. To be precise, we prove that:

\begin{thm}[= {\bf Theorem \ref{Thm1-1}} + {\bf Theorem \ref{Thm1-2}}]\label{thm02}
{\textit{\textbf{Conjecture \ref{conj}}}} holds for the degeneration of Calabi--Yau $n$-folds, up to a set of codimension $\geq 2$ in the base $S$.
\end{thm}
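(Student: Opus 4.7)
\emph{Plan of proof.} The plan is to handle the codimension-$1$ stratum of $\bigcup_i E_i$ by reducing to Wang's one-parameter result, treating a transverse disk as the one-parameter family and the remaining coordinates as holomorphic parameters. Since the locus of points in $\bigcup_i E_i$ lying on two or more components is closed of codimension $\geq 2$ in $S$, this reduction is enough to establish \textbf{Conjecture \ref{conj}} outside a set of codimension $\geq 2$, which is exactly the statement of the theorem.

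First I would fix a point $s\in E_i$ lying on no other boundary component and choose local coordinates $(z,w_1,\dots,w_{d-1})$ on $S$ near $s$ so that $E_i=\{z=0\}$. Only one monodromy $T_i$ is present, with nilpotent part $N:=\tfrac{1}{2\pi i}\log T_i^u$. By the nilpotent orbit theorem with parameters, the period map factors near $s$ as
\begin{equation*}
\Phi(z,w)\;=\;\exp\!\Bigl(\tfrac{\log z}{2\pi i}\,N\Bigr)\cdot\hat\Phi(z,w),
\end{equation*}
where $\hat\Phi$ extends holomorphically across $\{z=0\}$ and $\hat\Phi(0,0)$ represents Schmid's limiting Hodge filtration $F^{\bullet}_\infty(s)$.

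The next step is to estimate the squared Hodge norm $\|\Omega(z,w)\|^2$ of the holomorphic $n$-form, whose negative logarithm is (up to constants) the Weil--Petersson potential. Writing $\Omega$ in terms of a holomorphic frame for $F^n$ and applying the weight filtration of $N$ on the limiting mixed Hodge structure, one obtains an asymptotic expansion
\begin{equation*}
\|\Omega(z,w)\|^2\;=\;P_w(\log|z|)\;+\;(\text{lower order}),
\end{equation*}
with $P_w(t)$ a polynomial whose degree is twice the largest integer $k$ such that $N^k$ is nonzero on the limiting $(n,0)$-class $\Omega_\infty(w)$. Hence $\|\Omega\|^2$ extends continuously and nonvanishingly across $s$ if and only if $N\,F^n_\infty(s)=0$, which is the Hodge-theoretic dichotomy that the conjecture predicts. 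To turn this into a distance estimate, I would compute the length of a path $\gamma$ approaching $s$ under $g_{WP}=\partial\bar\partial(-\log\|\Omega\|^2)$; the absence of monodromy transverse to $E_i$ shows that the $w$-directions contribute a uniformly bounded Kähler metric near $s$, so the divergence (or not) of the length as $\gamma\to s$ is governed entirely by the $z$-factor, where the computation reduces to Wang's one-variable case for each fixed $w$.

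The principal technical obstacle, and the reason for the codimension-$\geq 2$ restriction, is the control of the mixed terms in $\partial\bar\partial(-\log\|\Omega\|^2)$. One must verify that the cross-differentials in $\partial_z$ and $\partial_{w_j}$ do not contribute logarithmic blow-up and that the transverse $w$-metric is uniformly bounded as $(z,w)\to(0,0)$; both are consequences of the Cattani--Kaplan--Schmid multivariable $SL_2$-orbit theorem in this ``degenerate-in-one-direction'' setting. These estimates break down at points of $\bigcup_i E_i$ where two or more components meet, since there the asymptotic period map is genuinely multi-variable and the estimates hold only within angular sectors---precisely what forces the codim-$\geq 2$ exclusion in \textbf{Theorem \ref{thm02}} and motivates the separate sectorial analysis carried out for Calabi--Yau threefolds in the later sections of the paper.
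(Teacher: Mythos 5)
Your architecture matches the paper's: the finite direction is handled by restricting to a holomorphic curve through $s$ and invoking Wang's one-parameter theorem ({\bf Theorem \ref{Thm1-1}}), the infinite direction is proved only at $1$-boundary points ({\bf Theorem \ref{Thm1-2}}), and the multi-intersection locus is the excluded codimension-$\geq 2$ set. However, your treatment of the infinite direction has a genuine gap at the crux of the argument. You write the potential as $P_w(\log|z|)+(\text{lower order})$ with $P_w$ depending on $w$, and you read off the dichotomy from the value of $N F^n_\infty$ at the single point $s$. But a real curve can approach $s$ while its $w$-component moves, so to conclude that \emph{every} such curve has infinite length you need the degree of $P_w$ to be constant and its leading coefficient to be bounded away from zero uniformly for $w$ near $s$ in $E_1^\circ$. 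This is exactly what the paper isolates as {\bf Observation \ref{obv}}: a priori the potential could behave like $y_1+y_2y_1^2$, whose leading $y_1$-coefficient dies as $y_2\to 0$, destroying the uniform bound $g_{1\bar 1}\gtrsim d/y_1^2$. The reason this cannot happen is that the limiting Hodge filtrations along $E_1^\circ$ form a variation of mixed Hodge structures, so the degree function (in particular the condition $N_1F^n_\infty(w)\neq 0$) is locally constant on $E_1^\circ$. This VMHS input is the one substantive ingredient beyond the nilpotent orbit theorem, and your proposal omits it entirely.

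Secondly, ``reduces to Wang's one-variable case for each fixed $w$'' only controls curves lying in a transverse disk; for arbitrary real curves one must bound the cross-terms $g_{1\bar j}$ and absorb them into the diagonal, as the paper does by proving $|g_{1\bar j}|\leq C_j/y_1^2$ via direct differentiation of $\tilde Q=p(y_1)+\bfH$ followed by a Cauchy--Schwarz absorption with a small parameter $\epsilon$. You acknowledge this issue but defer it to the Cattani--Kaplan--Schmid multivariable $SL_2$-orbit theorem, which the paper deliberately avoids (its estimates are sectorial, and geodesics are not known to stay in sectors); in the $1$-boundary setting the elementary expansion suffices and is what the paper uses. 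Two smaller points: uniform boundedness of the transverse $w$-block is not needed (its semi-positivity already makes it contribute nonnegatively to lengths), and the degree of $P_w$ is $d=\max\{k:N^kF^n_\infty\neq 0\}$ rather than $2d$, though neither affects the finite/infinite dichotomy.
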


The idea of the proof is using variation of mixed Hodge structures on codimension one boundary points. This puts a strong constrain on the Weil--Petersson potential as explained in {\bf Section 2.2} and hence the metric is controlled. 

The case of intersections of \emph{finite divisors} (i.e.~$E_i$ with $N_i F^n_\infty = 0$, see {\bf Definition \ref{d:inf-div}}) is a consequence of the one-parameter case. The locus of intersections of more than one \emph{infinite divisors} (i.e.~$N_i F^n_\infty \ne 0$) are subtle. It is not easy to pick out the dominant term of the metric in this case. Nevertheless, the case of Calabi--Yau threefolds turns out to be easier to handle because of the symmetry of mixed Hodge diamonds. In the case of a two parameter family of Calabi--Yau threefolds, we can give a complete description of the dominant term of Weil--Petersson potentials, which is called a candidate of the Weil--Petersson potential (cf.~{\bf Definition \ref{cWP}}).

The metric is computed on the covering $\bbH^2\times(\Delta)^{r-2}\to (\Delta^*)^2\times(\Delta)^{r-2}$. We define the \emph{angular slice} to be the set (where $c_j \in \mathbb{R}$ are constants)
\[
\{(z_1,z_2)\in\bbH^2:\Re{z_j}=c_j\}\times(\Delta)^{r-2}.
\] 
Toward a proof of {\textbf{Conjecture \ref{conj}}} in this case, we have: 
\begin{thm}[=\ {\bf Theorem \ref{main3}}]
Let $\mathfrak{X}/S$ be a degeneration of Calabi--Yau threefolds as stated in {\bf Conjecture \ref{conj}}. Suppose $s\in S$ lies on exactly two boundary divisors, say $s\in E_1\cap E_2$, with $E_1$ infinite and $E_2$ finite. Then $s$ has infinite Weil--Petersson distance {\it along the angular slices}.
\end{thm}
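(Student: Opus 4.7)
The plan is to use Schmid's nilpotent orbit theorem to write down the candidate Weil--Petersson potential explicitly on the angular slice through $s$, and then mimic the one-parameter infinite-distance argument in the $y_1$-direction. The finiteness hypothesis $N_2F^3_\infty=0$ on $E_2$, together with $[N_1,N_2]=0$ and the Calabi--Yau threefold mixed Hodge diamond classification from Section~2, should prevent the $y_2$-interaction from spoiling the $y_1$-growth of the potential.

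Choose local coordinates $(t_1,t_2,w)$ near $s$ with $E_j=\{t_j=0\}$, and universal-cover coordinates $z_j=\log t_j/(2\pi i)=x_j+iy_j$. By the nilpotent orbit theorem, $\Omega(z_1,z_2,w)\sim e^{z_1N_1+z_2N_2}a_0$ with $a_0\in F^3_\infty$. Since $N_2 a_0=0$ and the $N_j$ commute, this reduces to
\[
e^{z_1N_1}a_0 \;=\; \sum_{j=0}^{k_1}\frac{z_1^{\,j}}{j!}\,N_1^{\,j}a_0,
\]
where $k_1\geq 1$ is the largest integer with $N_1^{k_1}a_0\neq 0$. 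Applying the several-variable $SL_2$-orbit theorem of Cattani--Kaplan--Schmid to evaluate the Hodge norm of this expression in the nearby mixed Hodge structure, together with the CY3 mixed Hodge diamond classification, I would identify the candidate Weil--Petersson potential as
\[
\Phi(y_1,y_2)\;=\;\log\|\Omega\|^2 \;=\; k_1\log y_1 + \log c + o(1)\qquad(y_1\to\infty),
\]
with $c>0$, uniformly in $y_2$ on the slice. The inputs that exclude cross-term cancellation in the leading $y_1^{k_1}$ coefficient are $N_2a_0=0$, $h^{3,0}=1$, and the self-duality of CY3 limit mixed Hodge structures.

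On the angular slice the Weil--Petersson metric pulls back to the Riemannian metric $ds^2=-\tfrac12\Phi_{y_jy_k}\,dy_j\,dy_k$ on $(y_1,y_2)\in(0,\infty)^2$. From the asymptotic above, together with the positive-definiteness of $g$ on the slice, one derives the pointwise lower bound $\det(g_{\text{slice}})/g_{y_2y_2}\geq C/y_1^2$ for $y_1$ large, uniformly in $y_2$. Any path $\gamma$ on the slice ending at $s$ must have $y_1\to\infty$, and completing the square in $dy_2$ yields
\[
\mathrm{length}(\gamma)\;\geq\;\int_\gamma\sqrt{\det(g_{\text{slice}})/g_{y_2y_2}}\,|dy_1|\;\geq\; C'\!\int^{\infty}\frac{dy_1}{y_1}\;=\;\infty.
\]

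The hard part will be the control of $\Phi(y_1,y_2)$: namely, ruling out that cross contributions from $N_1N_2$-products on the higher graded pieces of the limit MHS produce a term of the form $y_1^a y_2^b$ with $a<k_1$ and $b>0$ that could dominate the $y_1^{k_1}$-growth along trajectories in which both $y_1$ and $y_2$ tend to infinity. The self-duality and the small Hodge numbers of a CY threefold limit MHS reduce this to a finite case analysis of admissible mixed Hodge diamonds, after which the Hessian estimate and the length integral above are routine.
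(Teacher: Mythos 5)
Your reduction of the potential is consistent with the paper's: since $E_2$ is finite, $d_2=0$ and the expansion of $\tilde Q$ from Section~3 collapses to $\tilde Q=\mathbf{p}(y_1)+(\text{terms exponentially small in }y_1\text{ or }y_2)$, so indeed $\log\tilde Q=D_1\log y_1+O(1)$ uniformly. The gap is in your Hessian step. Knowing $\Phi$ up to $o(1)$ does not control its second derivatives, and the quantity you rely on, $\det(g_{\mathrm{slice}})/g_{y_2y_2}=g_{11}-g_{12}^2/g_{22}$, is exactly the Schur complement that can degenerate: on the angular slice one has $g_{11}\sim D_1/y_1^2$, while $g_{12}$ and $g_{22}$ are both exponentially small in $y_2$, and nothing in the potential asymptotics prevents $g_{12}^2/g_{22}$ from being comparable to $1/y_1^2$ (for instance $g_{12}=e^{-y_2}/y_1$, $g_{22}=e^{-2y_2}$ gives $\det(g_{\mathrm{slice}})=0$). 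Since the Weil--Petersson form here is only semi-positive, $\det(g_{\mathrm{slice}})$ may vanish, and your claimed lower bound $\det(g_{\mathrm{slice}})/g_{y_2y_2}\geq C/y_1^2$ fails. This is precisely the perturbation phenomenon the paper isolates in Example~\ref{rem1}, where an exponentially small Hermitian perturbation of $y_1^{-2}\,dy_1\otimes dy_1$ yields finite distance (there via the $x$-directions, which is why the theorem is only asserted along angular slices).

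The paper completes the square in the opposite order, $g\geq g_{11}\left|dy_1+(g_{12}/g_{11})\,dy_2\right|^2$, which is valid even when $g$ is degenerate and yields a length element bounded below by a constant times $\left|dy_1+e^{-ry_2}g\,dy_2\right|/y_1$ with $g$ bounded. This is \emph{not} pointwise comparable to $dy_1/y_1$, so the last step is not routine: one must recognize that
\[
\frac{dy_1+e^{-ry_2}g\,dy_2}{y_1-e^{-ry_2}g/r}
\]
equals, up to correction terms whose integral along any curve in the slice is finite, the exact differential $d\log\left|y_1-e^{-ry_2}g/r\right|$ of a function tending to $+\infty$ at $s$. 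That exact-differential argument is the essential idea your proposal is missing. Conversely, the difficulty you flag as the hard part---cross monomials $y_1^ay_2^b$ in the potential---does not arise in this case, since finiteness of $E_2$ forces $d_2=0$; the real obstruction lives in the off-diagonal second derivatives, not in the leading term of $\Phi$.
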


The case that $s$ lies in the intersection of two infinite divisors is more complicated. As a beginning step toward this situation, we show that:

\begin{prop}[=\ {\bf Proposition \ref{prop}}]
In the case of Calabi--Yau threefolds, suppose $s\in S$ lies exact on two boundary divisors, say $s\in E_1\cap E_2$, with $E_i$'s being infinite. Then it has infinite distance measured by the dominant term of the candidates of the Weil--Petersson potentials. 
\end{prop}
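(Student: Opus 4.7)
The plan is to invoke the classification, established in earlier sections of this paper, of the possible dominant terms of candidate Weil--Petersson potentials at the intersection of two infinite boundary divisors in the Calabi--Yau threefold case. On the universal cover $\bbH^2$ of $(\Delta^*)^2$ with coordinates $z_j = x_j + i y_j$, each dominant term takes the form $\Phi_{\mathrm{dom}} = -\log P(y_1,y_2)$ for an explicit polynomial $P$ of total degree at most $6$ (since the nilpotency index of $\Omega_0$ under any element of the monodromy cone is at most $3$ in the CY threefold case), and the candidate metric is $g = i\,\partial\bar\partial\,\Phi_{\mathrm{dom}}$.

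First I would run through the finite list of admissible $P$'s produced by the classification. The condition that both $E_1$ and $E_2$ are infinite forces $N_1 \Omega_0 \neq 0$ and $N_2 \Omega_0 \neq 0$, so $P$ must contain monomials with positive exponents in both $y_1$ and $y_2$. For each such candidate I would then show that the distance to the corner $y_1, y_2 \to \infty$ is infinite in two steps. Step $(a)$: reduce $-\log P$ to a sum $-\sum_i c_i \log L_i(y_1, y_2)$ of logarithms of non-negative linear forms $L_i = a_i y_1 + b_i y_2$, up to a bounded error, using the fact that the nilpotent-orbit expansion of $\|\Omega\|^2$ organized by the bigrading of the LMHS expresses the leading part of $P$ as a product of such linear forms. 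Step $(b)$: for each summand $-c_i \log L_i$ with $c_i > 0$, derive a Poincar\'e-type lower bound $i\,\partial\bar\partial(-c_i \log L_i) \geq c_i' \,|dL_i|^2 / L_i^2$; combining these yields a metric which is complete toward the corner, because any curve escaping to the corner must push at least one $L_i \to \infty$, and $\int dL_i / L_i = \infty$.

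The main obstacle is Step $(a)$: verifying that every candidate $P$ in the classification factors into non-negative real linear forms in $y_1, y_2$. For low-complexity cases (e.g.\ when each $N_j$ individually satisfies $N_j^2 \Omega_0 = 0$) this is immediate from a direct expansion of $e^{z_1 N_1 + z_2 N_2}\Omega_0$, but when the combined monodromy is of maximal type, $P$ could a priori involve irreducible higher-degree factors. Ruling this out requires a careful analysis of the relative weight filtrations $W(N_1)$ and $W(N_1 + N_2)$ and of how $N_1, N_2$ act on the associated graded pieces: by the several-variable $SL_2$-orbit theorem of Cattani--Kaplan--Schmid, the $N_j$ restrict to commuting nilpotent operators on the bigraded components of the LMHS adapted to the monodromy cone, and the symmetry of the CY threefold Hodge diamond (which the paper has already exploited in the two-infinite-divisor analysis) forces the leading behavior of $P$ to split as a product of linear forms in $y_1, y_2$ with non-negative coefficients. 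With Step $(a)$ secured, Step $(b)$ is a routine Poincar\'e-disc estimate applied line by line, and the infiniteness of the distance follows.
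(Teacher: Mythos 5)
Your overall skeleton matches the paper's: invoke the classification of dominant candidate polynomials $p(y_1,y_2)$ and then show, case by case, that the induced metric is complete toward the corner $y_1,y_2\to\infty$. Your Step (b) is fine as far as it goes, and your strategy does handle the cases where the dominant part genuinely is a product of non-negative linear forms (the paper's cases (i)--(v), (vii), and the split subcases of (vi) and (ix)).

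The gap is Step (a). It is not true that every candidate in the classification has dominant part splitting into real non-negative linear forms, and the constraints coming from the classification do not force this. Concretely, in the homogeneous cubic case $p=Ay_1^3+By_1^2y_2+Cy_1y_2^2+Dy_2^3$ the admissibility conditions are $A,D>0$, $B^2-3AC\ge 0$, $C^2-3BD\ge 0$ and $\det(-\p_i\p_{\bar j}\log p)\ge 0$; a polynomial such as $(y_1+y_2)(y_1^2+\beta y_1y_2+\gamma y_2^2)$ with $\beta^2<4\gamma$ (irreducible positive-definite quadratic factor) can satisfy all of these --- e.g.\ $A=1$, $B=4.9$, $C=7.8$, $D=3.9$ gives $B^2-3AC>0$, $C^2-3BD>0$, $BC-9AD>0$ and $\det\ge 0$ on the first quadrant. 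The same problem occurs in the case $d_1=2$, $d_2=3$, where the condition is only $B^2-3AC\ge 0$, which does not imply $B^2-4AC\ge 0$. Your appeal to the several-variable $SL_2$-orbit theorem and the symmetry of the CY$_3$ Hodge diamond does not repair this: those inputs are what produce the classification table in the first place, and the table admits non-split candidates. A secondary problem: even where you can write $-\log P=-\sum_i c_i\log L_i+O(1)$ (which is always possible since any positive-definite quadratic is comparable to $(y_1+y_2)^2$), a bounded error in the \emph{potential} gives no control whatsoever on its Hessian, so the metric lower bound in Step (b) does not follow from an $O(1)$ comparison of potentials. The paper avoids both issues by a different mechanism in precisely these cases: for a homogeneous candidate $p$ of degree $k$ the matrix $M(p)=(-\p_i\p_{\bar j}\log p)$ is homogeneous of degree $-2$, and one shows its smallest eigenvalue is bounded below by $\lambda>0$ on the compact arc $K=\{y_1^2+y_2^2=1,\ y_i\ge 0\}$ (using $\det M(p)>0$ there), whence $M(p)\ge \lambda R^{-2}I$ and $\int ds\ge\sqrt{\lambda}\int dR/R=\infty$; the borderline equality cases ($B^2-3AC=0$, etc.) are handled by explicit completions of squares or by reduction to a perfect power $q^2$ or $q^3$. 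You would need to add an argument of this type (or prove a genuine splitting statement, which the classification alone does not supply) before your proof is complete.
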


(See {\bf Corollary \ref{cor2}} for some cases that the ``dominant'' assumption can be removed.) It should be pointed out that there are many hidden constrains on a Weil--Petersson potential, which reflect some non-trivial relations on the polarization $\tilde{Q}$. Classification of such potentials is also an important way to understand the (mixed) Hodge theory on the moduli. 

\begin{remark}
The Weil--Petersson metric is defined for degenerations of $n$-dimensional compact K\"{a}hler manifolds with $h^{n,0}=1$ (cf.~{\bf Section} 1.3). All the results stated above hold in this more general setting. 
\end{remark}
\subsection{Acknowledgement} 
I am grateful to Professor C.-L.~Wang for introducing this problem to me and for sharing with me his viewpoints. I am grateful to him and to Professor H.-W.~Lin for their steady encouragement. I am also grateful to Professors Z.~Lu and J.-D.~Yu, and Dr.~T.-C.~Yang for discussions. Results in this paper had been presented in the conference \emph{Calabi--Yau Geometry and Mirror Symmetry} in 2014 at National Taiwan University. I am grateful to Professor S.-T.~Yau for the invitation and for his interest in this work.

\section{Preliminaries}
In this section, we recall some preliminaries and fix some notations.
\subsection{Variation of Hodge structures} Let $(X,\omega)$ be an $n$-dimensional compact K\"{a}hler manifold. $V_\bbR:=H^m(X,\bbR)_{prim}$. We have the primitive Hodge decomposition on $V:=V_\bbR\otimes\bbC$: $V=\bigoplus_{p+q=m} P^{p,q}$ with $P^{p,q}=\overline{P^{q,p}}$, or equivalently, a Hodge filtration $F^\bullet$ on $V$: $V=F^0\supset F^1\supset\cdots\supset F^m$ with $F^p:=\bigoplus_{i\geq p} P^{i,m-i}$. On the piece $m\leq n$, we have the \textit{Hodge-Riemann bilinear form}:
\[
Q(u,v):=(-1)^{m(m-1)/2}\int_X u\wedge v\wedge \omega^{n-m},
\]
which polarizes $V$ in the sense that {\bf (a)} $Q(F^{p},F^{m+1-p})=0$ for all $p$ and {\bf (b)} $Q(Cv,\overline{v})>0$. Here $C$ is the \textit{Weil operator}.

Let $\mathfrak{X}\to S$ be a family of $n$-dimensional K\"{a}hler manifolds with a fixed K\"{a}hler class $[\omega]$. We pick a reference fiber $\mathfrak{X}_\circ$. All the primitive cohomology groups can be identified with $V$($=H^m(\mathfrak{X}_\circ,\bbC)_{prim}$). The corresponding data $(V,F^\bullet_s)$ above varies and forms a \textit{variation of Hodge structures}. It satisfies the Griffiths transversality. 

Let $D$ be the period domain which classifies all the Hodge filtration $F^\bullet$ on $V$ polarized by $Q$. $D$ can be realized as $G/K$, where $G=\aut(V_\bbR,Q)$ and $K$ the stabilizer of a reference point. $D$ has a natural compactification $\check{D}$, which consists of the filtrations only satisfying condition {\bf (a)}. It contains $D$ as an open dense subvariety. The family $\mathfrak{X}/S$ gives the \textit{period map} $\phi: S\to \Gamma\backslash D$. Here $\Gamma$ is the representation of $\pi_1(S)$ inside $G$. Finally, note that the period map is \emph{locally liftable} to $D$.

\subsection{Schmid's mixed Hodge theory} Let $\Delta:=\{z\in\bbC: |z|<1\}$ and $\Delta^*:=\Delta-\{0\}$. Put $S=\Delta^r$ and $S^\circ=(\Delta^*)^k\times\Delta^{r-k}$. Let $\phi:S^\circ\to \Gamma \backslash D$ be a polarized variation of Hodge structures over $S^\circ$ polarized by $Q$. Let $T$ be the universal cover of $S^\circ$. We have the following commutative diagram
\begin{displaymath}
    \xymatrix{
        T \ar[r]^\Phi \ar[d] & D \ar[d] \\
        S^\circ \ar[r]^{\phi}       & \Gamma\backslash D }
\end{displaymath}
Indeed, $T=\bbH^k\times\Delta^{r-k}$. We will use $z_i$ for the coordinates on $T$ and $t_i$ for the coordinates on $S^\circ$. They are related by $t_s=e^{2\pi\sqrt{-1}z_s}$, $1\leq i\leq k$, and $z_s=t_s$, $k+1\leq s\leq r$. Let $N_i$ be the monodromy operator around $t_i=0$, $1\leq i\leq k$. Define 
\[
A(z):=\exp(-\sum_{i=1}^k z_iN_i)\Phi(z).
\]
Then $A(z)$ is invariant under the action $z_i\mapsto z_i+1$. It descends to a holomorphic function on $S^\circ$, which is denoted by $a(t)$. By Schmid's nilpotent orbit theorem \cite{S1973}, the function $a(t)$ extends over $S$ \emph{holomorphically} to $\check{D}$. The extended values $F_\infty(t):=a(t)$ for $t\in S-S^\circ$ are called the \emph{limiting Hodge filtration}. 

In the case $r=k=1$, the monodromy operator $N$ defines an increasing filtration, called the monodromy weight filtration, $0=W_{-1}\subset W_0\subset\cdots\subset W_{2n}=V$ on $V$ so that $N(W_l)\subset W_{l-2}$ and there are isomorphisms $N^s :Gr^W_{n+s}\to Gr^W_{n-s}$, where $Gr^W_s:=W_s/W_{s-1}$. The triple $(V, W_\bullet, F_\infty(0))$ forms a mixed Hodge structure and $N$ is a morphism of weight $(-1,-1)$ \cite{S1973}. Moreover, for $s\geq 0$, the primitive part $P^W_{n+s}:=\ker N^{s+1}\subset Gr^W_{n+s}$ is polarized by $Q(\cdot,N^s\overline{\cdot})$.

For multivariable case, Cattani and Kaplan showed that the monodromy weight filtration behaves well in the following sense \cite{CK1982}. Let $\sigma$ be the cone $\{\sum_{i=1}^k a_iN_i: a_i>0\}$. Any $N\in \sigma$ induces \emph{the same} monodromy weight filtration on $V$, denoted by $W(\sigma)$. Furthermore, if $\tau_1$, $\tau_2$ are faces of $\sigma$ and $N\in\tau_1$ is not contained in the closure of $\tau_2$, then $W(\tau_1)$ is the monodromy weight filtration of $N$ relative to $W(\tau_2)$. We also remark that $N_i$ commutes with $N_j$ for all $i,j$. 

\subsection{The Weil--Petersson metric} We shall only consider the Calabi--Yau case. For a $n$-dimensional compact K\"{a}hler Calabi--Yau manifold $(X,\omega)$, by Bogomolov-Tian-Todorov unobstructedness theorem \cite{T1987}, \cite{T1989}, the local universal deformation space is smooth. Let $\mathfrak{X}/S$ be the maximal subfamily with a fixed K\"{a}her class $[\omega]$. The Kodaira-Spencer map $\rho: T_s(S)\to H^1(\mathfrak{X}_s,T_{\mathfrak{X}_s})$ is injective. Since each $\mathfrak{X}_s$ is a Calabi--Yau manifold, by Yau's theorem, we can choose a unique Ricci flat metric in the K\"{a}hler class, say $g(s)$, up to a volume normalization. For $u,v\in T_s(S)$, we define
\[
G(u,v):=\int_{\mathfrak{X}_s} \langle \rho(u),\rho(v)\rangle_{g(s)}.
\]
The resulting metric is called the \textit{Weil--Petersson} metric. Using the global non-vanishing $n$-form $\Omega(s)$, one can easily compute
\[
G(u,v)=\frac{Q(C\iota(u)\Omega,\overline{\iota(v)\Omega})}{Q(C\Omega,\overline{\Omega})},
\]
where $\iota(u)$ is the interior product. It induces an isomorphism 
\[
H^1(\mathfrak{X}_s,T_{\mathfrak{X}_s})\cong \hom(H^{n,0},H^{n-1,1}),~u\mapsto \iota(u).
\]
For convenience, put $\tilde{Q}:=\sqrt{-1}^n Q$. It was proved that $\tilde{Q}$ is the K\"{a}hler potential of the Weil--Petersson metric \cite{T1987}. In fact, the metric two form is given by
\begin{equation}\label{WPpotential}
\omega_{WP}=\frac{\sqrt{-1}}{2}\ric_{\tilde{Q}}(H^{n,0})=-\frac{\sqrt{-1}}{2}\p\pbar\log\tilde{Q}.
\end{equation}
And the metric tensor is given by 
\begin{equation}\label{WPmetric}
g_{WP}=-\sum_{i,j} \p_i\p_{\overline{j}} (\log\tilde{Q}) (dz_i\otimes d\overline{z}_j+d\overline{z}_j\otimes dz_i)
\end{equation}
where $\p_i:=\p/\p z_i~\text{and}~\p_{\overline{j}}:=\p/\p {\bar z}_j$. Given a variation of Hodge structures polarized by $Q$ on $S$ with $H^{n,0}\cong\bbC$, we may use  \eqref{WPpotential} as our definition of the Weil--Petersson metric on $S$ although it is only semi-positive \cite{G1984}.

\subsection{Variations of mixed Hodge structures}
A degeneration $\mathfrak{X}/S$ of compact K\"{a}hler manifolds is a family of compact K\"{a}hler varieties whose general fibers are smooth. For convenience, we shall say that ``$\mathfrak{X}/S$ is a degeneration".

\begin{defn}
Let $S$ be a complex manifold. A variation of mixed Hodge structures over $S$ consists of the following data:
\begin{itemize}
\item[(1)] a local system $V$ of finitely generated groups on $S$.
\item[(2)] a finite decreasing filtration $\{\mathcal{F}^p\}$ of the holomorphic vector bundle $\mathcal{V}:=V\otimes\mathcal{O}_S$ by holomorphic subbundles.
\item[(3)] a finite increasing filtration $\{\mathcal{W}_m\}$ of the local system $V_\bbQ:=V\otimes_\bbZ\bbQ$ by local subsystems.
\end{itemize}
subject to the following conditions:
\begin{itemize}
\item[(a)] For each $s\in S$, these data forms a mixed Hodge structure when restricted to $s$.
\item[(b)] The connection $\nabla:\mathcal{V}\to \mathcal{V}\otimes\Omega^1_S$ determined by the local system $V_\bbC$ satisfies the Griffiths' transversality.
\end{itemize}
\end{defn}
Let $\mathfrak{X}/S$ be a degeneration with a fixed K\"{a}hler class $[\omega]$ and $S^\circ$ be its smooth locus. After some birational modifications, we assume that $S-S^\circ=\bigcup_i E_i$ is a simple normal crossing divisor. A point $s\in S$ is called a \textit{$k$-boundary point} it lies exactly on $k$ smooth divisors. For $x\in E_i-\bigcup_{j\neq i} E_j$, we have a limiting Hodge filtration $F_\infty(x)$ and the monodromy weight filtration determined by $N_i$. When $x$ varies (away from $\bigcup_{j\neq i}E_j$), these data fit together and form a \emph{variation of mixed Hodge structures}, \cite{S1973}, \cite{CK1982} and \cite{K2010}. See also \cite{K1981}.

\section{A Finite Distance Criterion along Boundary Divisors}

To prove the conjecture, we need to compute the Weil--Petersson distance of all $k$-boundary points. In this section, we consider the case $k=1$ and solve the conjecture completely.

\subsection{Review of the one parameter case} 
We will review the result for $r=k=1$. The finite distance property is equivalent to a  Hodge-theoretic property. To be precise, we have
\begin{thm}[=\ {\bf Theorem 1.1} \cite{W1997}]
Let $X$ be a Calabi--Yau variety which admits a (one parameter) smoothing to Calabi--Yau manifolds. Then $X$ has finite Weil--Petersson distance if and only if $NF^n_\infty=0$.
\end{thm}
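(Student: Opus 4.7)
The plan is to extract the Weil--Petersson distance from the asymptotic expansion of the K\"ahler potential $\tilde Q$ using Schmid's nilpotent orbit theorem. Work on $\Delta^*$ with uniformizing coordinate $z \in \bbH$ and $t = e^{2\pi\sqrt{-1}z}$; choose a holomorphic section $\tilde\Omega(t)$ of the extended line bundle $F^n_\infty$ over $\Delta$ with $\tilde\Omega(0) \neq 0$, and set $\Omega(z) = \exp(zN)\,\tilde\Omega(t)$ for the corresponding section of $F^n$ on $\bbH$. Since $N$ is an infinitesimal isometry of $Q$, one has
\[
\tilde Q(z,\bar z) \;=\; \sqrt{-1}^{\,n}\,Q\bigl(\tilde\Omega(t),\,\exp(-2\sqrt{-1}\,y\,N)\,\overline{\tilde\Omega(t)}\bigr),\qquad y=\imag z,
\]
which expands into a polynomial in $y$ of degree $m := \max\{k\geq 0 : N^k F^n_\infty \neq 0\}$ with coefficients real-analytic in $t,\bar t$. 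Its leading coefficient at $t=0$ is a nonzero multiple of $Q(\tilde\Omega(0),\,N^m\overline{\tilde\Omega(0)})$, which is nonzero (and of definite sign) by the polarization of the primitive part $P^W_{n+m}$ of Schmid's limiting mixed Hodge structure via $Q(\,\cdot\,,N^m\overline{\,\cdot\,})$.

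If $NF^n_\infty = 0$ then $m = 0$, and $\tilde Q(t,\bar t)$ is real-analytic and positive in a neighborhood of $0$ (positivity at $t=0$ comes from the polarization on $\Gr^W_{2n}$ applied to $\tilde\Omega(0)$). Hence $\log\tilde Q$ is smooth near $0$, the form $\omega_{WP} = -\tfrac{\sqrt{-1}}{2}\partial\bar\partial\log\tilde Q$ extends smoothly across $t=0$, and the Weil--Petersson distance from any nearby point to $t=0$ is finite. Conversely, suppose $m\geq 1$, and set $L := -\log|t|^2 = 4\pi y$. The expansion above yields
\[
\log\tilde Q \;=\; m\log L \,+\, \rho(t,\bar t) \,+\, O(1/L)\qquad \text{as } t\to 0,
\]
with $\rho$ real-analytic near $0$. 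A direct computation gives $-\partial_t\partial_{\bar t}\log L = 1/(|t|^2 L^2)$, so the leading part of the metric coefficient is $\asymp m/(|t|^2 L^2)$, and parametrizing the radial ray $t=r$ produces the length integral $\asymp \int_0^{r_0} dr/(r\,|\log r|)$, which diverges.

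The main obstacle is to argue that the lower-order pieces do not cancel the divergence. This is handled by separating $\log\tilde Q$ into the singular part $m\log L$ and a smooth remainder whose $\partial\bar\partial$ is uniformly bounded in $t,\bar t$, so that the principal blow-up $1/(|t|^2 L^2)$ genuinely dominates; and since the radial direction alone witnesses divergence, no delicate angular analysis is needed. Equivalently, under $z = (\log t)/(2\pi\sqrt{-1})$ the situation reduces to the classical fact that the cusp at $\imag z = \infty$ of the Poincar\'e-type metric $|dz|^2/y^2$ lies at infinite distance, which is the cleanest Schmid-style packaging of the estimate.
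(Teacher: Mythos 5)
Your argument is essentially the standard one: for this statement the paper gives no proof of its own but simply cites Wang \cite{W1997}, and the mechanism you use --- untwist by $\exp(-zN)$ via the nilpotent orbit theorem, expand $\tilde{Q}$ as a polynomial in $y$ of degree $m=\max\{k:N^kF^n_\infty\neq 0\}$ whose leading coefficient is pinned down (nonzero, definite sign) by the polarization $Q(\cdot,N^m\overline{\cdot})$ on $P^W_{n+m}$, then conclude $-\partial_z\partial_{\bar z}\log\tilde{Q}\sim m/y^2$ --- is exactly Wang's proof, and is reproduced almost verbatim in this paper's own proof of {\bf Theorem \ref{Thm1-2}} (the function class $\bfH$, the polynomial $p(y_1)$, and the lemma $-\partial_1\partial_{\bar 1}\log\tilde{Q}\sim d/y_1^2$).

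Two cautions. First, when $m=0$ the potential $\tilde{Q}$ is continuous and nonvanishing at $t=0$ but is \emph{not} real-analytic there: the expansion contains terms of the shape $y^{j}t^{a}\bar{t}^{b}$ with $a+b\geq 1$, i.e.\ powers of $\log|t|$ times positive powers of $|t|$, so $\omega_{WP}$ need not extend smoothly across $t=0$. What one actually obtains is an estimate of the form $g_{t\bar t}=O\bigl(|t|^{-1}(\log|t|)^{k}\bigr)$, whose square root is integrable along paths reaching $t=0$; that is all finiteness requires, so the conclusion survives, but the justification as written overstates the regularity. Second, for $m\geq 1$ the divergence of the length of the radial ray proves nothing by itself --- infinite distance is an infimum over \emph{all} paths. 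What closes the argument is the pointwise lower bound $g_{z\bar z}\geq c/y^2$ (domination of a multiple of the Poincar\'e metric near the cusp), since any path tending to $t=0$ has $y\to\infty$ and hence length at least $\sqrt{c}\int dy/y=\infty$. You do establish this bound, so the proof stands, but the sentence claiming that ``the radial direction alone witnesses divergence'' inverts the logic and should be removed in favor of the cusp estimate you give at the end.
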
 

One direction of the {\bf Conjecture \ref{conj}} is just an application of this.

\begin{thm}\label{Thm1-1}
Let $\mathfrak{X}/S$ be a Calabi--Yau degeneration, which is smooth outside a simple normal crossing divisor $\bigcup_i E_i$. Then $X:=\mathfrak{X}_s$ has finite Weil--Petersson distance if $N_iF^n_\infty(s)=0$ for all $i$ with $s\in E_i$.
\end{thm}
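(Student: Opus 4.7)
The plan is to reduce the multi-parameter statement to Wang's one-parameter criterion by pulling back the family along a holomorphic disc through $s$. Working locally near $s$, choose coordinates $(t_1,\ldots,t_r)$ on $S$ so that $s$ is the origin and the divisors containing $s$ are the hyperplanes $E_i=\{t_i=0\}$ for $i=1,\ldots,k$. Consider the curve $\gamma:\Delta\to S$ defined by
\[
\gamma(u)=(u,u,\ldots,u,c_{k+1},\ldots,c_r)
\]
for generic small constants $c_{k+1},\ldots,c_r$; then $\gamma(0)=s$ and $\gamma(u)\in S^\circ$ for $u\neq 0$.

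The monodromy of the pulled-back family $\gamma^*\mathfrak{X}\to\Delta$ around $u=0$ equals $N:=N_1+\cdots+N_k$, which lies in the open monodromy cone $\sigma$ at $s$. By the Cattani--Kaplan theory recalled in Section 1.2, the monodromy weight filtration $W(\sigma)$ and the multivariable limiting Hodge filtration $F^\bullet_\infty(s)$ supply the correct limit mixed Hodge data for the one-parameter degeneration $\gamma^*\mathfrak{X}/\Delta$. Under the hypothesis $N_iF^n_\infty(s)=0$ for all $i\leq k$, it follows that $NF^n_\infty(s)=0$. Wang's one-parameter criterion (Theorem 1.1 of \cite{W1997}, as quoted above) then guarantees that $\gamma^*\mathfrak{X}/\Delta$ has finite Weil--Petersson distance; equivalently, $\gamma$ has finite length in the Weil--Petersson metric of the one-parameter family on $\Delta$.

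It remains to observe that this one-parameter length agrees with the length of $\gamma$ measured in the ambient Weil--Petersson metric on $S^\circ$. Indeed, the Weil--Petersson potential $\log\tilde{Q}$ on $S^\circ$ pulls back along $\gamma$ to the Weil--Petersson potential of $\gamma^*\mathfrak{X}/\Delta$, because both are computed from the Hodge norm of the same polarized period data restricted to the curve, and the Kodaira--Spencer map of the subfamily is the restriction of that of $\mathfrak{X}/S$ to the tangent direction of $\gamma$. Consequently $\gamma$ realises a finite-length path from $s$ to an interior smooth point, so $s$ has finite Weil--Petersson distance in $S$.

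The only nontrivial ingredient beyond Wang's one-parameter theorem is the compatibility of the limit mixed Hodge structure along the diagonal direction with the multivariable limit at $s$, and this is exactly the content of the Cattani--Kaplan statement that any element in the open cone $\sigma$ produces the same weight filtration and the nilpotent orbit extends holomorphically. Given that input, the argument is a direct one-line reduction and needs no new analytic estimate; the subtlety, if any, is purely in confirming the match of the Hodge-theoretic data under pullback, not in controlling the Weil--Petersson potential itself.
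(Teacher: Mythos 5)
Your proof is correct and follows essentially the same route as the paper, which simply chooses a holomorphic curve through $s$ meeting $S^\circ$ and reduces to Wang's one-parameter criterion. The extra details you supply (the explicit diagonal disc, the identification of the pulled-back monodromy with $N_1+\cdots+N_k$ via Cattani--Kaplan, and the compatibility of the Weil--Petersson potential under pullback) are exactly the points the paper leaves implicit, so no further comment is needed.
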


\begin{proof}
Let $S^\circ:=S-\bigcup E_i$. Choose a \textit{holomorphic} curve $C$ passing through $s$ and meeting $S^\circ$. Pulling back everything along $C$, we reduce to the one parameter case. $X$ has finite Weil--Petersson distance along $C$, and hence along $S$ as desired.
\end{proof}

After running a minimal model program, we can replace $X$ by a minimal model \cite{W2002}. The required MMP now exists in our setting \cite{F2011}. Hence
\begin{thm}[=\ {\bf Proposition 1.2} \cite{W2002}]
Let $\mathfrak{X}/\Delta$ be a one dimensional degeneration as above. If $\mathfrak{X}_0$ has  finite Weil--Petersson distance, then after running a minimal model program, $\mathfrak{X}_0$ can be replaced by a Calabi--Yau variety with at worst canonical singularities.
\end{thm}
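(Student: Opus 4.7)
The proof combines the preceding Hodge-theoretic criterion with the existence of the MMP for log canonical pairs in \cite{F2011}. The plan is to use $NF^n_\infty=0$ (equivalent to finite Weil--Petersson distance by the previous theorem) to single out a distinguished ``volume-form bearing'' component of the central fiber, and then to run a relative MMP over $\Delta$ that contracts every other component, leaving a model whose central fiber is Calabi--Yau with canonical singularities.

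First I would reduce to a semistable model: after a finite base change $t\mapsto t^d$ and a log resolution, one may assume $\mathfrak{X}/\Delta$ is semistable, so $\mathfrak{X}_0=\sum D_i$ is a reduced simple normal crossing divisor and the monodromy is unipotent. These operations preserve both the finite Weil--Petersson distance property (which is birational) and the Hodge-theoretic condition $NF^n_\infty=0$. Under this reduction one has Steenbrink's limiting mixed Hodge structure $(V,W_\bullet,F^\bullet_\infty)$, and the vanishing $NF^n_\infty=0$ says $F^n_\infty\subset W_n$, so the limiting holomorphic volume form $\Omega_\infty$ has pure weight $n$.

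Next I would use Steenbrink's weight spectral sequence, whose $E_1^{0,n}$ term is $\bigoplus_i H^n(D_i)$, to infer that $\Omega_\infty$ pulls back to a nonzero global holomorphic $n$-form on at least one component $D_{i_0}$ while having no contribution from the higher codimensional strata $D_I=\bigcap_{i\in I}D_i$. Writing $K_{\mathfrak{X}/\Delta}\sim \sum a_iD_i$, this identifies $D_{i_0}$ as a component with $a_{i_0}=0$, while the other $a_i$ are nonnegative; adjunction then shows $K_{D_{i_0}}\sim 0$. I would then run the relative MMP on the lc pair $(\mathfrak{X},\mathfrak{X}_0-D_{i_0})$ over $\Delta$, invoking \cite{F2011}. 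Since $-D_{i_0}$ is not relatively pseudo-effective, the MMP contracts all components other than the birational transform of $D_{i_0}$; the output $\mathfrak{X}'\to\Delta$ satisfies $K_{\mathfrak{X}'/\Delta}\sim_{\mathbb{Q}}0$, has at worst canonical singularities on $\mathfrak{X}'$, and has irreducible central fiber $\mathfrak{X}'_0$. By inversion of adjunction, $\mathfrak{X}'_0$ inherits at worst canonical singularities, and $K_{\mathfrak{X}'_0}\sim 0$ makes it a Calabi--Yau variety.

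The main obstacle is the step of extracting from the abstract Hodge-theoretic vanishing $NF^n_\infty=0$ a geometric statement about the coefficients $a_i$ in $K_{\mathfrak{X}/\Delta}=\sum a_iD_i$ precise enough to feed into the MMP. This requires matching Steenbrink's combinatorial description of $F^\bullet_\infty$ with the discrepancies of the boundary components, and one must rule out pathological configurations in which several components simultaneously ``carry'' the limit form, in order to guarantee that a single contraction target exists and that the MMP really terminates at an irreducible canonical Calabi--Yau central fiber rather than at a strictly log canonical one.
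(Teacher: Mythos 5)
This statement is not proved in the paper at all: it is imported verbatim as Proposition~1.2 of \cite{W2002}, and the only thing the present paper adds is the remark that the relative minimal model program required in Wang's argument is now available unconditionally thanks to Fujino's semistable MMP \cite{F2011}. So there is no in-paper proof to measure your write-up against; what you have produced is an attempted reconstruction of Wang's proof, and it should be judged on its own terms.

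As a reconstruction it has the right ingredients (semistable reduction, the implication $NF^n_\infty=0\Rightarrow F^n_\infty\subset W_n$, Steenbrink's weight spectral sequence locating the limiting $n$-form on a single component $D_{i_0}$), but the MMP step --- which is where the actual work lies --- does not go through as written. First, running an MMP for the pair $(\mathfrak{X},\mathfrak{X}_0-D_{i_0})$ and asserting that it must contract every component other than $D_{i_0}$ because $-D_{i_0}$ is not relatively pseudo-effective is not a valid deduction: an MMP contracts one extremal ray at a time, its output need not have irreducible central fiber, and a pair-MMP produces (d)lt singularities for the pair rather than canonical singularities of the total space. The argument in \cite{W2002} instead runs the $K_{\mathfrak{X}}$-MMP over $\Delta$ on the semistable model to reach $K_{\mathfrak{X}'}\sim_{\mathbb{Q},\Delta}0$ with $(\mathfrak{X}',\mathfrak{X}'_0)$ log canonical, and then must separately show that the components not carrying the form can be contracted. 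Second, inversion of adjunction applied to an lc pair $(\mathfrak{X}',\mathfrak{X}'_0)$ only yields that $\mathfrak{X}'_0$ is \emph{semi-log-canonical}; upgrading this to ``irreducible with canonical singularities'' is equivalent to showing the pair is plt along the central fiber, and that is precisely where the hypothesis $NF^n_\infty=0$ must be used a second time rather than assumed. (A smaller slip: a nonzero holomorphic $n$-form on $D_{i_0}$ gives $h^0(K_{D_{i_0}})\geq 1$, i.e.\ $K_{D_{i_0}}$ effective, not $K_{D_{i_0}}\sim 0$; triviality only holds after the contractions.) You have honestly flagged the main obstacle yourself, but it is exactly the step your outline leaves open, so the proposal is a plan rather than a proof.
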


\subsection{Generalization to higher dimensional bases}

We will give a partial answer to the opposite direction in {\bf Conjecture \ref{conj}} in this subsection. Let $\pi:\mathfrak{X}\to S$ be such a degeneration on the moduli space. Let $S^\circ$ be the $\pi$-smooth locus and $S-S^\circ=\bigcup_i E_i$. After some blow-ups, we may assume $\sum E_i$ is a simple normal crossing divisor. Put $E_i^\circ=E_i-\bigcup_{j\neq i}E_j$.

Recall that the limiting filtration $F_\infty(s)$ is a function on $s\in E$. Suppose there exists an $s\in E_i^\circ$ so that $N_i F^n_\infty(s)\neq 0$. Then $N_i F^n_\infty(t)\neq 0$ for all $t\in E_i$ near $s$. Conversely, if $N_i F^n_\infty(s)=0$ for some $s\in E^\circ_i$, then $N_i F^n_\infty(t)= 0$ \textit{for all} $t$ near $s$. This follows from the fact that the limiting Hodge filtrations fit together and form \textit{a variation of mixed Hodge structures} on $E^\circ_i$. Since $E^\circ_i$ is connected, we must have $N_i F^n_\infty(s)=0$ \textit{for all} $s\in E^\circ_i$ or $N_i F^n_\infty(s)\neq 0$ \textit{for all} $s\in E^\circ_i$. This leads to the following definition. 

\begin{defn} \label{d:inf-div}
An irreducible boundary divisor $E_i$ is called an \textit{infinite divisor} (a \textit{finite divisor}, respectively), if there exists $s\in E_i^\circ$ such that $N_iF_\infty^n(s)\neq 0$ ($N_iF_\infty^n(s)= 0$, respectively).
\end{defn}

\begin{remark}
If $s\in E_i^\circ$ satisfies $N_iF^n_\infty(s)=0$, then it has finite distance along a holomorphic curve. Restricting the family to the curve and running the minimal model programing for the total family, the central fiber can be replaced by a Calabi--Yau variety with at worst canonical singularities. The property that $N_iF^n_\infty(t)=0$ for all $t$ near $s$ reflects the fact that the canonical singularities are deformation invariant \cite{K1999}.
\end{remark}

In this section, we prove the following:
\begin{thm}\label{Thm1-2}
If $s\in E_1$ with $E_1$ being an infinite divisor, and $s \not\in E_j$ for any $j \ne i$, i.e~$s$ is a $1$-boundary point,  then $\mathfrak{X}_s$ has infinite Weil--Petersson distance. 
\end{thm}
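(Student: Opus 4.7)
The plan is to extract the leading $y_1^\ell$ asymptotic of the Weil--Petersson potential $\tilde{Q}$ near $s$, and then use $\log y_1$ as a Lipschitz exhaustion function for $g_{WP}$, forcing any path ending at $s$ to have infinite length.

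Since $s$ is a $1$-boundary point, I would choose local coordinates $(t_1,\ldots,t_r)$ on $S$ with $s=0$ and $E_1=\{t_1=0\}$, and lift to the universal cover $T=\bbH\times\Delta^{r-1}$ via $t_1=e^{2\pi\sqrt{-1}\,z_1}$. Let $\Omega(z)$ trivialize $F^n$ on $T$. By Schmid's nilpotent orbit theorem, $\Omega(z)=\exp(z_1N_1)\Omega_\infty(t)$ with $\Omega_\infty$ holomorphic across $\{t_1=0\}$. Since $N_1$ is $Q$-infinitesimally skew,
\[
\tilde{Q}(\Omega,\bar\Omega)=\sum_{k=0}^{\ell}\frac{(-2\sqrt{-1}\,y_1)^k}{k!}\,\sqrt{-1}^{\,n}\,Q(\Omega_\infty,N_1^{k}\bar\Omega_\infty),
\]
where $\ell$ is the largest integer with $N_1^\ell\Omega_\infty(s)\ne 0$; since $E_1$ is infinite, $\ell\ge 1$. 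The class $[\Omega_\infty(s)]\in Gr^{W}_{n+\ell}$ is primitive of type $(n,\ell)$, so the polarization of the limiting mixed Hodge structure (via $Q(\cdot,N_1^\ell\bar\cdot)$ on the primitive part) forces the leading coefficient to be a positive real smooth function:
\[
\tilde{Q}=c(t,\bar t)\,y_1^{\ell}+O(y_1^{\ell-1}),\qquad c(t,\bar t)>0.
\]

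Taking logarithms, $\log\tilde{Q}=\ell\log y_1+\log c(t,\bar t)+O(1/y_1)$. Differentiating, and observing that $\partial_{1}$ acts on any $t_1$-dependent term via the chain rule as $2\pi\sqrt{-1}\,t_1\partial_{t_1}$ (hence contributing only $O(e^{-2\pi y_1})$), I expect, as $y_1\to\infty$,
\[
g_{1\bar 1}=\frac{\ell}{4y_1^{2}}\bigl(1+o(1)\bigr),\quad g_{1\bar j}=O(1/y_1^{2})\ (j>1),\quad [g_{i\bar j}]_{i,j>1}=G_0(t)+O(1/y_1),
\]
where $G_0(t)$ is the smooth Hermitian form on the transverse directions at the boundary. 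Assuming $G_0$ is positive definite near $s$, an AM--GM estimate on the cross terms $g_{1\bar j}$ yields $g_{WP}(\dot\gamma,\dot\gamma)\ge c\,|a^{1}|^{2}/y_1^{2}$ for some $c>0$, where $a^i=\dot z^i$ are the complex components of a tangent vector.

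To conclude, for any path $\gamma:[0,1)\to S^{\circ}$ with $\gamma(t)\to s$, the lift $\tilde\gamma$ to $T$ has $\log y_1(\tilde\gamma(t))\to\infty$. Since $|d\log y_1(\dot{\tilde\gamma})|=|\Im a^{1}|/y_1\le|a^{1}|/y_1$, the pointwise bound above gives
\[
L_{WP}(\gamma)=\int_{0}^{1}\sqrt{g_{WP}(\dot{\tilde\gamma},\dot{\tilde\gamma})}\,dt\;\ge\;\sqrt c\int_{0}^{1}|d\log y_1(\dot{\tilde\gamma})|\,dt\;\ge\;\sqrt c\bigl[\log y_1(\tilde\gamma(t))-\log y_1(\tilde\gamma(0))\bigr]\;\longrightarrow\;\infty.
\]
The main obstacle I expect is the non-degeneracy of the transverse block $G_0(t)$ at $s$: the AM--GM (equivalently, Schur complement) step breaks down at special boundary points where $g_{WP}$ fails to be positive definite along $E_1$. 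In that case one needs either a finer analysis of the limiting variation of mixed Hodge structures on $E_1^\circ$ to recover the required transverse positivity, or a reduction to a generic holomorphic slice through $s$ along which the transverse block stays non-degenerate.
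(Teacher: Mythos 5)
Your proposal follows essentially the same route as the paper's proof: lift to the universal cover, use the nilpotent orbit theorem to write $\tilde{Q}$ as a polynomial of degree $d=\ell\ge 1$ in $y_1$ plus exponentially decaying terms, use the polarization of the limiting mixed Hodge structure (together with the variation of mixed Hodge structures along $E_1^\circ$, which is the paper's Observation~\ref{obv}) to see that the top coefficient is positive and bounded away from zero near $s$, deduce $g_{1\bar{1}}\sim d/y_1^2$ and $g_{1\bar{j}}=O(1/y_1^2)$, and integrate $dy_1/y_1$. The one point where you diverge is the handling of the cross terms, and the obstacle you flag at the end --- positive definiteness of the transverse block $G_0$ --- is not actually needed. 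Your Cauchy--Schwarz step absorbs the penalty $C_j^2|a^j|^2/(\epsilon^2 y_1^2)$ into $G_0$, which is why you must assume $G_0>0$; the paper instead keeps that penalty as a separate term: because $g_{1\bar{j}}$ decays like $1/y_1^2$, i.e.\ at the same rate as $g_{1\bar{1}}$, the penalty in the $j$-th direction still carries a $1/y_1^2$ factor and its contribution to the length of any curve is finite, while the transverse block $\sum_{i,j\ge 2}g_{i\bar{j}}\,a^i\bar{a}^j$ is discarded using only its semi-positivity, which is automatic since $g_{WP}$ is semi-positive. With that rearrangement your lower bound $g_{WP}(\dot\gamma,\dot\gamma)\ge c\,|a^1|^2/y_1^2-(\text{integrable error})$ holds at every $1$-boundary point, including those where the period map degenerates along $E_1$, and your $\log y_1$ exhaustion argument then closes the proof without any reduction to a generic slice. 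A minor imprecision worth fixing: your sum over $0\le k\le \ell$ should run up to the nilpotency order of $N_1$; the terms with $k>\ell$ have coefficients vanishing along $t_1=0$ and hence lie in the exponentially decaying class $\bfH$, which is exactly why the paper splits $a(t)$ into the part with $i_1=0$ and the part with $i_1\ge 1$ before truncating the polynomial.
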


It suffices to prove this in local case. Let
\[
\Omega(z):=[\exp(z_1N_1)A(z)]^n.
\]
Here $[*]^n$ means the projection to the $n$-th flag. It represents a holomorphic top form on $\mathfrak{X}_{z}$. $A(z)$ has a holomorphic extension $a(t)$ over $S$. Locally near $t=0$, 
\begin{align}\label{t-expansion}
[a(t)]^n=a_0+\sum_I a_I t^I=a_0+\sum_{|I|\geq 1,~ i_1=0} a_It^I+\sum_{|I|\geq 1,~i_1\geq 1} a_It^I.
\end{align}
Denote by $\bbA$ the pull-back of $[a(t)]^n$ to $T$ (via $t_1=\exp(2\pi\sqrt{-1}z_1)$ and $t_s=z_s$ for $s\geq 2$), we have
\[
\bbA(z)=g(z_2,\cdots,z_r)+h(z_1,z_2,\cdots,z_r).
\]
$e^{z_1N_1}\bbA=\Omega$ on $S^\circ$. We can compute the Weil--Petersson metric using $e^{z_1N_1}\bbA$. Observe that $e^{z_1N_1}$ is indeed a polynomial and the series $h$ involves the factor $e^{2\pi\sqrt{-1}z_1}$. Let $z_s=x_s+\sqrt{-1}y_s$. For any polynomial $p(y_1)$, $p(y_1)h$ will exponentially decade as $y_1\to\infty$, uniformly in $z_s$ for $s\geq 2$ and all the partial derivatives of $p(y_1)h$ will do. For convenience, we denote by $\bfH$ the class of functions with the property stated above or elements in it. Then, since $N_1$ is nilpotent, 
\begin{align*}
\tilde{Q}(\Omega,\overline{\Omega})&=\tilde{Q}(e^{z_1N_1}(g+h),\overline{e^{z_1N_1}(g+h)})\\
&=\tilde{Q}(e^{2\sqrt{-1}y_1N_1}(g+h),\overline{g+h})\\
&=\tilde{Q}(e^{2\sqrt{-1}y_1N_1}g,\overline{g})+\bfH.
\end{align*}
The first term is a polynomial in $y_1$. Write
\[
\tilde{Q}(e^{2\sqrt{-1}y_1N_1}g,\overline{g})=\sum_{l=0}^d s_l(z_2,\cdots,z_r)y_1^l=:p(y_1).
\]
$s_l$'s are \textit{analytic} function in $z_2,\cdots, z_r$. Here is the key observation. 
\begin{obser}\label{obv}
If $E_1$ is a finite divisor, then $p(y_1)$ is a degree (in $y_1$) zero polynomial. If $E_1$ is an infinite divisor, then $p(y_1)$ is a degree $d$ polynomial with $s_d(z_2,\cdots,z_r)\neq 0$ for all $z_i$ near $0$.
\end{obser}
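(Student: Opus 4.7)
My plan is to Taylor-expand the exponential and then to decode the coefficients through Deligne's canonical splitting of the limit mixed Hodge structure at the base point of $E_1^\circ$. Writing
\[
p(y_1)=\tilde Q\bigl(e^{2\sqrt{-1}y_1 N_1}g,\bar g\bigr)=\sum_{k\geq 0}\frac{(2\sqrt{-1})^k}{k!}\,\tilde Q(N_1^k g,\bar g)\,y_1^k,
\]
so $s_k=\frac{(2\sqrt{-1})^k}{k!}\tilde Q(N_1^k g,\bar g)$. In the finite case, Definition \ref{d:inf-div} gives $N_1 F^n_\infty(s)=0$ for every $s\in E_1^\circ$; since $g(z_2,\dots,z_r)$ is a generating section of the line bundle $F^n_\infty$, this forces $N_1 g\equiv 0$, so every $s_k$ with $k\geq 1$ vanishes and $p$ is constant. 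This is the easy half.

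For the infinite case, fix the base point $0\in E_1^\circ$ and let $(V,W_\bullet,F^\bullet_\infty(0))$ be the limit mixed Hodge structure with $W_\bullet$ the weight filtration of $N_1$. Using Deligne's canonical splitting $V=\bigoplus I^{p,q}$, I decompose $g(0)=\sum_q g_q$ with $g_q\in I^{n,q}$ (valid because $F^n=\bigoplus_q I^{n,q}$ and $\dim F^n=1$ in the Calabi--Yau setting). The hypothesis $N_1 F^n_\infty(0)\neq 0$ forces $g_q\neq 0$ for some $q\geq 1$; set $d:=\max\{q:g_q\neq 0\}\geq 1$. Since $N_1\colon I^{p,q}\to I^{p-1,q-1}$ and $I^{a,b}=0$ whenever $a<0$ or $b<0$, one immediately gets $N_1^k g(0)=0$ for $k>d$ and $N_1^d g(0)=N_1^d g_d\in I^{n-d,0}$; in particular $N_1^{d+1}g_d=0$, so $g_d$ represents a primitive class in $\Gr^W_{n+d}$.

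It remains to show $s_d(0)=\frac{(2\sqrt{-1})^d}{d!}\tilde Q(N_1^d g(0),\overline{g(0)})\neq 0$. Two ingredients do this. First, the polarization $Q$ satisfies $W_l^\perp=W_{2n-l-1}$, so $\tilde Q$ descends to a non-degenerate pairing $\Gr^W_{n+d}\otimes \Gr^W_{n-d}\to \bbC$, and weight considerations collapse the sum $\sum_{q,q'}\tilde Q(N_1^d g_q,\overline{g_{q'}})$ to the single term $(q,q')=(d,d)$, giving $\tilde Q(N_1^d g(0),\overline{g(0)})=\tilde Q(N_1^d g_d,\overline{g_d})$. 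Second, the Hodge--Riemann bilinear relations for the polarized mixed Hodge structure assert that $Q_d(u,v):=Q(u,N_1^d v)$ polarizes $P^W_{n+d}$, and hence is nondegenerate on the primitive Hodge class represented by $g_d$: $\tilde Q(g_d,N_1^d \overline{g_d})\neq 0$. Combining this with the infinitesimal isometry identity $Q(N_1u,v)+Q(u,N_1v)=0$ yields $\tilde Q(N_1^d g_d,\overline{g_d})=(-1)^d\tilde Q(g_d,N_1^d\overline{g_d})\neq 0$, so $s_d(0)\neq 0$; holomorphicity of $s_d$ in $(z_2,\dots,z_r)$ then extends this nonvanishing to a whole neighborhood of $0$. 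I expect the main obstacle to be the orthogonality reduction in the previous step: Deligne's splitting only satisfies $\overline{I^{p,q}}\equiv I^{q,p}$ modulo strictly lower weight, so one has to invoke the duality $W_l^\perp=W_{2n-l-1}$ carefully to discard the lower-weight contributions before applying the Hodge--Riemann polarization.
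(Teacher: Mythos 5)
Your argument is correct and is, in substance, the proof the paper leaves implicit: the paper dismisses the Observation with the single sentence that it is ``a consequence of the variation of mixed Hodge structures,'' and your route --- expanding $s_k=\tfrac{(2\sqrt{-1})^k}{k!}\tilde Q(N_1^kg,\bar g)$, noting that the one-dimensional $F^n_\infty(0)$ sits in a single Deligne piece $I^{n,d}$, and then applying the polarization $Q(\cdot\,,N_1^d\,\overline{\cdot\,})$ of the primitive part $P^W_{n+d}$ together with $Q(N_1u,v)=-Q(u,N_1v)$ --- is exactly how that consequence is extracted; it also matches the paper's remark that $d$ records the graded piece of the weight filtration containing $a_0$. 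Two small points should be tightened. First, $s_d$ depends on $\bar g$ as well as $g$, so it is real-analytic rather than holomorphic; what you actually use is continuity, which suffices for the nonvanishing near $0$. Second, your argument controls the degree only at the base point: to conclude that $p$ has degree exactly $d$, i.e.\ that $s_k\equiv 0$ for all $k>d$ \emph{throughout} a neighborhood in $E_1^\circ$ (which is precisely what the paper needs in order to exclude potentials such as $y_1+y_2y_1^2$), you must add that the integer $q$ with $F^n_\infty(z)\subset I^{n,q}(z)$ is locally constant in $z$. This follows because the ranks of $\mathcal F^n\cap\mathcal W_m$ are locally constant for a variation of mixed Hodge structures, and it is the one step where the VMHS hypothesis on all of $E_1^\circ$, rather than the single limit mixed Hodge structure at $0$, is genuinely used. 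With that sentence added, your proof is complete and agrees with the intended one.
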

This is a consequence of the variation of mixed Hodge structures. It gives a strong constrain on the degree of the polynomials \textit{in a neighborhood} so that the Weil--Petersson metric is uniform in this neighborhood. For instance, the potential function can not take the form $y_1+y_2y_1^2$; namely, the higher degree $y_1$-term occurs with a coefficient going to zero when $y_2\to 0$. Note that the number $d$ indicates which graded piece in the monodromy weight filtration $a_0$ sits in. The metric two form
\[
\omega_{WP}=-\frac{\sqrt{-1}}{2}\p\pbar\log\tilde{Q}(\Omega,\overline{\Omega})=-\frac{\sqrt{-1}}{2}\sum_{i,j}\p_i\p_{\bar j}\log\tilde{Q}(\Omega,\overline{\Omega})dz_i\wedge d\overline{z}_j.
\]
Suppose now $E_1$ is an infinite divisor, i.e., $d\geq 1$. Then we have 
\begin{lem}
\[
-\frac{\p}{\p z_1}\frac{\p}{\p \overline{z}_1} \log\tilde{Q}\sim \frac{d}{y_1^2}
\]
as $y_1$ large.
\end{lem}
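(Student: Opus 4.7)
The plan is to isolate the polynomial-in-$y_1$ leading behaviour of $\tilde Q(\Omega,\overline{\Omega})$, discard exponentially small terms, and then compute the mixed partial directly by a $1/y_1$-expansion. Recall from the discussion preceding the lemma that
$$\tilde Q(\Omega,\overline{\Omega}) = p(y_1) + \bfH = \sum_{l=0}^{d} s_l(z_2,\ldots,z_r)\,y_1^l + \bfH,$$
and, since $E_1$ is infinite, Observation \ref{obv} gives $d \geq 1$ with $s_d$ nowhere vanishing in a neighbourhood of $0$. In particular $p(y_1) \sim s_d\,y_1^d \to \infty$ uniformly in $(z_2,\ldots,z_r)$ near $0$, so $\bfH/p(y_1)$ -- and, by the Leibniz rule, all its $z_1$- and $\bar z_1$-derivatives -- still decays exponentially as $y_1 \to \infty$.

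Writing
$$\log \tilde Q(\Omega,\overline{\Omega}) = \log p(y_1) + \log\!\Bigl(1+\tfrac{\bfH}{p(y_1)}\Bigr),$$
the second summand contributes $o(y_1^{-2})$ after applying $\partial_1\partial_{\bar 1}$, because successive differentiation only multiplies $\bfH$-type quantities by rational functions of $y_1$ that grow at most polynomially. For the first summand I would factor out the leading term,
$$\log p(y_1) = \log s_d \;+\; d\,\log y_1 \;+\; \log\!\Bigl(1 + \sum_{k=1}^{d}\tfrac{s_{d-k}}{s_d}\,y_1^{-k}\Bigr),$$
and expand the last logarithm as a power series in $1/y_1$, convergent for $y_1$ large uniformly in $(z_2,\ldots,z_r)$, to obtain $\log p(y_1) = \log s_d + d\log y_1 + \sum_{k\geq 1} c_k(z_2,\ldots,z_r)\,y_1^{-k}$ with $c_k$ analytic.

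Finally, using $y_1 = (z_1-\bar z_1)/(2i)$ one has the elementary identity $\partial_1\partial_{\bar 1} f(y_1) = \tfrac{1}{4}f''(y_1)$. Applied termwise to the expansion above, this yields
$$-\partial_1\partial_{\bar 1}\log p(y_1) \;=\; \frac{d}{4\,y_1^2} \;+\; O(y_1^{-3}),$$
and combining with the exponentially small correction from $\log(1+\bfH/p)$ gives the stated asymptotic (up to the overall constant absorbed into $\sim$).

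The only delicate point is the invocation of Observation \ref{obv}: the uniform non-vanishing of $s_d$ near $0$ is what guarantees that the $d\log y_1$ term is genuinely dominant in the \emph{whole} neighbourhood. Without it, $s_d$ could vanish on some slice and one would fall into exactly the ``$y_1 + y_2 y_1^2$'' pathology warned against after Observation \ref{obv}, destroying the uniform $1/y_1^2$ lower bound. Everything else is bookkeeping with exponentially decaying remainders.
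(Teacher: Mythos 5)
Your proof is correct and takes essentially the same route as the paper: both isolate the decomposition $\tilde Q = p(y_1)+\bfH$, invoke Observation \ref{obv} to keep $s_d$ bounded away from zero uniformly near $0$, discard the exponentially decaying part, and extract the leading behaviour (the paper via the quotient rule applied to $p'/p$, you via a $1/y_1$-expansion of $\log p$ — a cosmetic difference). The extra factor $\tfrac14$ you obtain is just the normalization $\p_1\p_{\bar 1}=\tfrac14(\p_{x_1}^2+\p_{y_1}^2)$, which the paper absorbs into the symbol ``$\sim$'', so the two computations agree.
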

\begin{proof}
The twice differentiation is just the Laplace operator up to a constant. In this case, $\tilde{Q}=p(y_1)+\bfH$. One computes
\begin{align*}
-\frac{\p^2}{\p x_1^2}\log(p(y_1)+\bfH)=-\frac{\p}{\p x_1}\frac{\bfH}{p(y_1)+\bfH}=-\frac{\bfH}{(p(y_1)+\bfH)^2}.
\end{align*}
and
\begin{align*}
-\frac{\p^2}{\p y_1^2}&\log(p(y_1)+\bfH)=-\frac{\p}{\p y_1}\frac{p'(y_1)+\bfH}{p(y_1)+\bfH}\\
&=-\frac{(p''(y_1)+\bfH)(p(y_1)+\bfH)-(p'(y_1)+\bfH)^2}{(p(y_1)+\bfH)^2}\\
&\sim -\frac{d(d-1)s_d^2y_1^{2d-2}-d^2s_d^2y_1^{2d-2}}{s_d^2 y_1^{2d}}=\frac{d}{y_1^2}
\end{align*}
Note that ``$\sim$" holds since $s_d$ is bounded away from $0$. Summing together, we obtain the result.
\end{proof}

\begin{lem}\label{1j}
\[
-\frac{\p}{\p z_1}\frac{\p}{\p \overline{z}_j} \log\tilde{Q}
\]
is dominated by $C_j/y_1^2$ as $y_1$ large, where $C_j$ is a constant depending on $j$.
\end{lem}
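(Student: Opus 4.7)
The plan is to parallel the proof of the preceding lemma. Writing
$$\log\tilde Q=\log p(y_1)+\log\!\left(1+\tfrac{\bfH}{p(y_1)}\right),$$
the second factor is well-defined for $y_1$ large because, by \textbf{Observation \ref{obv}}, $p(y_1)\sim s_d(z_2,\dots,z_r)\,y_1^d$ with $s_d$ bounded away from zero in a fixed compact neighborhood of the base point. Since $\bfH$ and all of its partial derivatives decay exponentially in $y_1$ uniformly in $(z_2,\dots,z_r)$, the mixed derivative $\p_1\p_{\overline{j}}\log(1+\bfH/p)$ is itself exponentially small, hence easily absorbed into the target bound $C_j/y_1^2$. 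It therefore suffices to estimate $\p_1\p_{\overline{j}}\log p(y_1)$.

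For this I would exploit the factorisation
$$p(y_1)=s_d(z_2,\dots,z_r)\,y_1^d\bigl(1+R\bigr),\qquad R=\sum_{k=1}^{d}\frac{s_{d-k}}{s_d}\,y_1^{-k}=O(1/y_1),$$
so that $\log p=d\log y_1+\log s_d+\log(1+R)$. The first summand depends only on $y_1$ and is annihilated by $\p_{\overline{j}}$ for $j\ge 2$; the second summand, a function of $(z_2,\dots,z_r,\overline{z}_2,\dots,\overline{z}_r)$ alone since $g$, and hence each $s_l$, is independent of $z_1$, is annihilated by $\p_1$. Only $\log(1+R)$ contributes to $\p_1\p_{\overline{j}}\log p$.

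Now $\p_{\overline{j}}$ acts solely on the real-analytic coefficients $s_{d-k}/s_d$ and preserves the $y_1$-degree of each summand $y_1^{-k}$, while $\p_1$ acts solely through $y_1$ via $\p_1(y_1^{-k})=\tfrac{ik}{2}y_1^{-k-1}$, raising the decay order by one. Expanding $\log(1+R)=R-R^2/2+\cdots$, convergent once $|R|<1$, and differentiating term by term, each contribution is at most $O(1/y_1^2)$; uniform bounds on the $s_l$ and on $1/s_d$ over the chosen neighborhood then supply a constant $C_j$ independent of $y_1$.

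The real content of the lemma, and the only subtlety, is the cancellation of leading $O(1/y_1)$ terms in the naive quotient-rule expansion
$$\p_1\p_{\overline{j}}\log p=\frac{\p_1\p_{\overline{j}}p}{p}-\frac{(\p_1 p)(\p_{\overline{j}}p)}{p^2};$$
both summands individually contribute $-id\,\p_{\overline{j}}s_d/(2s_d\,y_1)$ to leading order, and their difference vanishes. The factorisation $p=s_d\,y_1^d(1+R)$ renders this cancellation transparent, as the putative $O(1/y_1)$ contribution is absorbed into the mixed-derivative-free pieces $d\log y_1$ and $\log s_d$, leaving only the harmless $\log(1+R)$ part. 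No input from the variation of mixed Hodge structures beyond the nonvanishing of $s_d$ already established in \textbf{Observation \ref{obv}} is required.
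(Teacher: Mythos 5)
Your proof is correct and rests on the same two ingredients as the paper's own argument: the decomposition $\tilde{Q}=p(y_1)+\bfH$ with the $\bfH$-part contributing only exponentially small terms, and the cancellation of the would-be $O(1/y_1)$ leading term in $\p_1\p_{\bar j}\log p$, which the paper verifies by computing that the quotient-rule numerator $(p+\bfH)\p_{y_j}(p'+\bfH)-(p'+\bfH)\p_{y_j}(p+\bfH)$ drops to degree $2d-2$ in $y_1$. Your factorization $\log p=d\log y_1+\log s_d+\log(1+R)$ is merely a cleaner packaging of that same cancellation, and, like the paper, it uses nothing beyond the nonvanishing of $s_d$ from \textbf{Observation \ref{obv}}.
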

\begin{proof}
One computes
\[
-\frac{\p}{\p x_j}\frac{\p}{\p x_1}\log(p(y_1)+\bfH)=-\frac{\p}{\p x_j}\frac{\bfH}{p(y_1)+\bfH}.
\]
The term $\p p(y_1)/\p x_j$ is at most a polynomial in $y_1$ of degree $d$. Thus
\[
-\frac{\p}{\p x_j}\frac{\bfH}{p(y_1)+\bfH}\sim\frac{\bfH}{(p(y_1)+\bfH)^2}\frac{\p p(y_1)}{\p x_j}=\bfH.
\]
Also,
\begin{align}\label{metric1j}
-&\frac{\p}{\p y_j}\frac{\p}{\p y_1}\log(p(y_1)+\bfH)=-\frac{\p}{\p y_j}\frac{p'(y_1)+\bfH}{p(y_1)+\bfH}\\
&\frac{\p}{\p y_j}(p'(y_1)+\bfH)=d\cdot \frac{\p s_d}{\p y_j}y_1^{d-1}+\cdots\notag\\
&\frac{\p}{\p y_j}(p(y_1)+\bfH)=\frac{\p s_d}{\p y_j}y_1^{d}+\cdots\notag
\end{align}
Therefore,
\begin{align*}
(p(y_1)+\bfH)\frac{\p}{\p y_j}(p'(y_1)+\bfH)-(p'(y_1)+\bfH)\frac{\p}{\p y_j}(p(y_1)+\bfH)
\end{align*}
is asymptotic to a polynomial in $y_1$ with degree at most $2d-2$. (\ref{metric1j}) is dominated by $C_j/y_1^2$ with a constant $C_j$.

Finally, for the mixed term,
\[
\frac{\p}{\p x_j}\frac{\p}{\p y_1}\log(p(y_1)+\bfH)=\frac{\p}{\p x_j}\frac{p'(y_1)+\bfH}{p(y_1)+\bfH}.
\]
This can be dominated by $C_j/y_1^2$ as in the previous case. We obtain the result by summing up the above discussions.
\end{proof}

\begin{proof}[Proof of Theorem \ref{Thm1-2}]
Following (\ref{WPmetric}) and {\bf Lemma} \ref{1j}, 
\[
-|g_{1\overline{j}}||dz_1\otimes d\overline{z}_j|\geq  -\frac{C_j}{y_1^2}\left|dz_1\otimes d\overline{z}_j\right|\geq -\left(\frac{\epsilon^2}{y_1^2}dz_1\otimes d\overline{z}_1+\frac{C_j^2}{\epsilon^2 y_1^2}dz_j\otimes d\overline{z}_j\right)/2.
\]
Then
\[
(\ref{WPmetric})\geq \left(\frac{d}{y_1^2}-\frac{\epsilon^2(r-1)}{y_1^2}\right)dz_1\otimes d\overline{z}_1+\sum_{i,j=2}^r g_{i\overline{j}} dz_i\otimes d\overline{z}_j-\frac{1}{2\epsilon^2y_1^2}\sum_{j=2}^r C_j^2dz_j\otimes d\overline{z}_j.
\]
We pick $\epsilon$ small enough so that $A:=d-\epsilon^2(r-1)$ is positive. When we integrate over any curve, the second term will be non-negative and the third term will be finite. Now let $p\in S^\circ$ and $\gamma$ be any real curve connecting $s$ with $p$.
\[
\int_\gamma ds\geq \int_{c}^\infty \frac{\sqrt{A}}{y_1}dy_1\pm(\text{finite terms})=\infty.
\]
\end{proof}

\begin{remark} Define the degree function on $E_i$ by
\[
d_i(s):=\max\{k\in\bbN:N_i^kF_\infty^n(s)\neq 0\},~~\text{if the set is non-empty}
\]
and we set $d_i(s)=0$ if $N_iF^n_\infty(s)=0$. $d_i(s)$ is lower semi-continuous. It also follows  from variation of mixed Hodge structures that this function is constant on $E_i-\cup_{j\neq i} E_j$. Thus for a finite divisor $E_i$, we have $d_i(s)\equiv 0$. For an infinite divisor $E_i$, $d_i(s)$ may jump down on further intersections.

For Calabi--Yau threefolds, we have a nice control on degree functions. Indeed, using notations in \eqref{t-expansion}, $N_i^{d_i(0)+1}a_I=0$ for any $a_I$. This fact follows from the structure of mixed Hodge diamonds. This guarantees that $d_i(0)\geq d_i(s)$ for $s$ near $0$. It follows that the degree functions are locally constant in threefold case.
\end{remark}

\begin{picture}(0,110)
	\put(80,100){\circle*{3}}
	\put(65,84){\circle*{3}} \put(95,84){\circle*{3}}
	\put(50,68){\circle*{3}} \put(80,68){\circle*{3}} \put(110,68){\circle*{3}} 
	\put(35,54){\circle*{3}} \put(65,54){\circle*{3}} \put(95,54){\circle*{3}} \put(125,54){\circle*{3}}
	\put(50,38){\circle*{3}} \put(80,38){\circle*{3}} \put(110,38){\circle*{3}} 
	\put(65,22){\circle*{3}} \put(95,22){\circle*{3}}
	\put(80,4){\circle*{3}}
			
	\put(65,80){\vector(0,-2){22}}
	\put(50,75){\small $N$}
	\put(20,68){$a_0 \to $}
	
	\put(15,38){\small $F^3_\infty$}
	\put(30,22){\small $F^2_\infty$}
	\put(45,6){\small $F^1_\infty$}
	\put(60,-10){\small $F^0_\infty$}
		
	\put(160,88){If $a_0$ degenerates to the indicated place,}
	\put(160,74){there is no element $a_I$ such that}
	\put(160,60){$N^ka_I\neq 0$ for $k\geq 2$.}
	\put(160,46){Otherwise, the first two rows should}
	\put(160,32){be non-zero. This can happen only if}
	\put(160,18){$a_0$ degenerates to the first two rows.}
	\put(160,4){The other cases can be checked similarly.}	
\end{picture}
\quad \\

\section{Two Parameter Family of Calabi--Yau Threefolds}
To describe the Weil--Petersson metric in the case $k\geq 2$, let us examine the potential function $\tilde{Q}$ more carefully. We assume at this moment that $r=k=2$ and the divisors $E_1$, $E_2$ are all infinite divisors. The holomorphic function $a(t)$ can be expressed locally as
\begin{align}
a(t)=&a_0+\sum_{|I|\geq 1,~i_2=0} a_It^I+\sum_{|I|\geq 1,~i_1=0} a_It^I+(\text{remaining terms})\\
&=:a_0+f_1(t_1)+f_2(t_2)+h(t)\label{expansion}.
\end{align}
For convenience, write $A_1:=e^{2\sqrt{-1}y_1N_1}$, $A_2=e^{-2\sqrt{-1}y_2N_2}$ and put $d_i:=\max\{l\in\bbN:N_i^{l}a_0\neq 0\}$. We have
\begin{align}\label{asym}
\tilde{Q}(\Omega,\overline{\Omega})&=\tilde{Q}(e^{2\sqrt{-1}y_1N_1}\bbA,e^{-2\sqrt{-1}y_2N_2}\overline{\bbA})\\
&=\tilde{Q}(A_1a_0,A_2\overline{a}_0)+\tilde{Q}(A_1a_0,A_2\overline{f}_1)+\tilde{Q}(A_1a_0,A_2\overline{f}_2)\notag\\
&\quad+\tilde{Q}(A_1f_1,A_2\overline{a}_0)+\tilde{Q}(A_1f_1,A_2\overline{f}_1)+\tilde{Q}(A_1f_1,A_2\overline{f}_2)\notag\\
&\quad+\tilde{Q}(A_1f_2,A_2\overline{a}_0)+\tilde{Q}(A_1f_2,A_2\overline{f}_1)+\tilde{Q}(A_1f_2,A_2\overline{f}_2)+\bfH_{12}\notag.
\end{align}
Here, of course, $\bfH_{12}$ is the function class with the property that any partial derivative decays exponentially as $y_1$, $y_2\to\infty$. We decompose $\tilde{Q}(A_1a_0,A_2\overline{a}_0)$ into the following:
\begin{align*}
\tilde{Q}(A_1a_0,A_2\overline{a}_0)&=\tilde{Q}((A_1-I)a_0+a_0,(A_2-I)\overline{a}_0+\overline{a}_0)\\
&=\tilde{Q}(a_0,\overline{a}_0)+\tilde{Q}((A_1-I)a_0,\overline{a}_0)\\
&\quad+\tilde{Q}(a_0,(A_2-I)\overline{a}_0)+\tilde{Q}((A_1-I)a_0,(A_2-I)\overline{a}_0).
\end{align*}
$y_1^{d_1}$ occurs in the second term and $y_2^{d_2}$ occurs in the third term with \textit{non-zero} (in fact, positive) coefficients by the polarization condition.

The sum $\tilde{Q}(A_1f_1,A_2\overline{a}_0)+\tilde{Q}(A_1f_1,A_2\overline{f}_1)+\tilde{Q}(A_1a_0,A_2\overline{f}_1)$ is a polynomial in $y_2$ with degree $\leq d_2$  whose coefficients decay exponentially as $y_1\to\infty$ because of the term $A_1f_1$. Note that we can freely interchange the positions of $A_1$ and $A_2$ inside $\tilde{Q}$. The counterpart $\tilde{Q}(A_1a_0,A_2\overline{f}_2)+\tilde{Q}(A_1f_2,A_2\overline{f}_2)+\tilde{Q}(A_1f_2,A_2\overline{a}_0)$ is similar. We will denote them by $p_2(y_2)$ and $p_1(y_1)$, respectively. Finally, $\tilde{Q}(A_1a_0,A_2\overline{a}_0)$ is a polynomial in $y_1,y_2$, denoted by $\bp(y_1,y_2)$.
\begin{align}\label{Q}
\tilde{Q}=\bp+p_1+p_2+\bfH_{12}.
\end{align}

Formally, the function $\tilde{Q}$ satisfies {\bf (1)} $\tilde{Q}>0$ and {\bf (2)} The matrix $-\p_i\p_{\bar j}\log\tilde{Q}$ is semi-positive definite. 
\begin{defn}\label{cWP}
We say that a polynomial $\bp(y_1,y_2)$ is a \emph{candidate of Weil--Petersson potentials}, if it satisfies {\bf (1')} $\bp> 0$ and {\bf (2')} the matrix $-\p_i\p_{\bar j}\log\bp$ is semi-positive definite as $y_1,y_2$ large enough.
\end{defn}

\begin{remark}
Since $-\log\tilde{Q}$ is a Weil--Petersson potential, the corresponding polynomial $\bp$ in \eqref{Q} must be a candidate of Weil--Petersson potential. Indeed, the functions $p_1,p_2$ and $\bfH_{12}$ decay exponentially. $\bp$ will dominate them as $y_1,y_2$ large. Hence the positivity of $\tilde{Q}$ implies the positivity of $\bp$. 

For the second condition, by a direct differentiation, we first note that
\begin{align}\label{pert}
(-\p_i\p_{\bar j}\log\tilde{Q})=(-\p_i\p_{\bar j}\log\bp)+E,~~E=\begin{bmatrix}a & b\\ \bar b & c\end{bmatrix}.
\end{align}
$E$ is a hermitian matrix (maybe non-positive definite) whose entries are exponentially decay in $y_1,y_2$ (see the computation below). If $(-\p_i\p_{\bar j}\log\bp)$ is not semi-positive definite, then it will be non semi-positive definite along some algebraic curve $C$. Hence $(-\p_i\p_{\bar j}\log\tilde{Q})$ will also be non semi-positive definite along $C$.
\end{remark}

If $E_1$ is infinite and $E_2$ is finite, the same computation shows that $\bf p$ only depends on $y_1$ and $p_2=0$.

Naively, to compute the distance, we should compute the metric matrix $(-\p_i\p_{\bar j}\log\tilde{Q})$ first and then integrate it over any real curve. The first step can be done explicitly (see the computation in {\bf Section 3.1}). For the second step, in light of \eqref{pert}, we may regard $(-\p_i\p_{\bar j}\log\tilde{Q})$ as a perturbation of the metric $(-\p_i\p_{\bar j}\log\bp)$. In {\bf Section 3.2}, we will prove that the metric induced by $(-\p_i\p_{\bar j}\log\bp)$ have infinite distance. So the problem can be formulated into the following statement: 

{\it Suppose ${\bf p}$ is a polynomial in $y_i$ satisfies {\bf (1')} and {\bf (2')}. Let $M:=(-\p_i\p_{\bar j}\log{\bf p})$ be a metric with infinite distance from any smooth point to $s$ and $E$ be any hermitian matrix whose entries consists of exponential decay functions in $y_i$. If $M+E$ is semi-positive definite, then it also has infinite distance.}

Unfortunately, the conditions on $E$ are still too weak (cf.~{\it Example} 3.10) to imply the conjecture in this case since we do not really characterize $M+E$ when it comes from a potential function. We need a detail study on such a perturbation matrix $E$. 

\subsection{Candidates of Weil--Petersson potentials}  In order to understand the metric, we will classify the dominate polynomial $p$ of all the candidates $\bp$ in this section.

\subsubsection{$E_1$ is infinite and $E_2$ is finite} In this case, $\bp$ depends only on $y_1$. Therefore, the condition {\bf (1'), (2')} are satisfied provided that the leading coefficient of $\bp$ is positive.

\subsubsection{Both $E_1$ and $E_2$ are infinite}  Let's examine the derivatives of the dominant term first. Put
\[
f:={\bf p}(y_1,y_2)+p_1(y_1)+p_2(y_2).
\] 
One computes:
\begin{align}
\begin{split}
&[(\p_{y_1} f)^2-f(\p^2_{y_1} f)]/f^2= (\p_{y_1} {\bf p}(y_1,y_2)+\p_{y_1} p_1(y_1)+\p_{y_1} p_2(y_2))^2/{\bf p}^2\\
&-({\bf p}(y_1,y_2)+p_1(y_1)+p_2(y_2))(\p^2_{y_1}{\bf p}(y_1,y_2)+\p^2_{y_1}p_1(y_1)+\p^2_{y_1} p_2(y_2))/{\bf p}^2\\
&\sim [(\p_{y_1}{\bf p}+\p_{y_1} p_2)^2-{\bf p}(\p^2_{y_1}{\bf p}+\p^2_{y_1} p_2)]/{\bf p}^2\\
&\sim[(\p_{y_1} {\bf p})^2-{\bf p}\p^2_{y_1} {\bf p}]/{\bf p}^2+e^{-y_1}(\text{bounded terms})\label{y11}.
\end{split}
\end{align}
For the other half, note that $\p_{x_1}{\bf p}=\p_{x_1}p_1=0$ and $x_1$ only appears together with $y_1$ on the exponents. A similar computation shows that 
\begin{align}\label{x11}
[(\p_{x_1}f)^2-f(\p^2_{x_1} f)]/f^2\sim &\frac{(\p_{x_1} p_2)^2-{\bf p}\p^2_{x_1}p_2}{{\bf p}^2}
\end{align}
Let $D_i=\deg_{y_i}{\bf p}$. The numerator of \eqref{x11} is a polynomial in $y_2$ of degree (at most) $d_2+D_2\leq 2D_2$. All of its coefficients involve $e^{-y_1}$ factors. Summing together, we have:
\begin{align*}
\frac{|\p_1 f|^2-f(\p_1\p_{\bar 1} f)}{f^2}\sim \frac{(\p_{y_1} {\bf p})^2-{\bf p}\p^2_{y_1} {\bf p}}{{\bf p}^2}+e^{-y_1}(\text{bounded terms}).
\end{align*}
Similarly, we have
\begin{align*}
\frac{|\p_2 f|^2-f(\p_2\p_{\bar 2} f)}{f^2}\sim \frac{(\p_{y_2} {\bf p})^2-{\bf p}\p^2_{y_2} {\bf p}}{{\bf p}^2}+e^{-y_2}(\text{bounded terms}).
\end{align*}
For the off-diagonal terms $[(\p_{1} f)(\p_{\bar 2} f)-f(\p_{1\bar 2}f)]/f^2$, the main term is
\begin{align*}
\frac{(\p_{y_1} {\bf p})(\p_{y_2}{\bf p})-{\bf p}\p_{y_1}\p_{y_2} {\bf p}}{{\bf p}^2}
\end{align*}
provided that it is non-zero. All the other terms in the numerator can be dominated by $e^{-y_1}g_2(y_2)$, $e^{-y_2}g_1(y_1)$ or $Ce^{-(y_1+y_2)}$, where $g_i$ is a polynomial of $y_i$ with degree at most $2D_i-2$ and $C>0$ is a constant. Now we consider the cross term:
\begin{align*}
\frac{(\p_{y_1} {\bf p}+\p_{y_1}p_2)(\p_{x_2}p_1)-{\bf p}\p_{y_1}\p_{x_2}p_1}{{\bf p}^2}.
\end{align*}
The term $(\p_{y_1}p_2)(\p_{x_2}p_1)$ is bounded by $Ce^{-(y_1+y_2)}$ with $C>0$. For the remaining two terms:
\begin{align*}
\frac{(\p_{y_1} {\bf p})(\p_{x_2}p_1)-{\bf p}\p_{y_1}\p_{x_2}p_1}{{\bf p}^2}.
\end{align*}
The degree of $y_1$ in the numerator is at most $2D_1-2$ by a direct computation. Note that we can ignore the $y_2$ terms since the factor $e^{-y_2}$ appears in the coefficients. Similarly, the numerator of the other term; namely, 
\begin{align*}
\frac{(\p_{y_2} {\bf p})(\p_{x_1}p_2)-{\bf p}\p_{y_2}\p_{x_1}p_2}{{\bf p}^2}
\end{align*}
is a polynomial in $y_2$ with degree at most $2D_2-2$. The off-diagonal term $-\p_1\p_{\bar 2}\log f$ is dominated by 
\begin{align}\label{off-dia}
\frac{(\p_{y_1} {\bf p})(\p_{y_2}{\bf p})-{\bf p}\p_{y_1}\p_{y_2} {\bf p}}{{\bf p}^2}+C_1e^{-y_2}\frac{y_1^{2D_1-2}}{{\bf p}^2}+C_2e^{-y_1}\frac{y_2^{2D_2-2}}{{\bf p}^2}+C_3\frac{e^{-(y_1+y_2)}}{{\bf p}^2}.
\end{align}

We associate $\mathbf{p}$ with a convex polygon $\bbR^2_+$ as follow. Suppose $y_1^ay_2^b$ is a monomial with non-zero coefficient in $\mathbf{p}$, then we draw a dot at $(a,b)$. Let $p$ be the dominant polynomial; that is, the polynomial consisting of the monomials in {\bf p} given by all the right-upper dots. The asymptotic behavior of $\bp$ is completely determined by $p$. $y_1N_1+y_2N_2$ defines {\it the same} monodromy weight filtration for any positive $y_1,y_2$ \cite{CK1982}. Hence $p$ must have at most homogeneous degree $d=3$ and $1\leq d_i\leq d\leq 3$. We have the following several cases.

\paragraph{\bf I} $d_1=d_2=1$. Then $1\leq d\leq 2$ simply because $y_1^ay_2^bN_1^aN_2^b=0$ for $a\geq 2$ or $b\geq 2$. 

If $d=1$, then $p=Ay_2+By_1$ with $A,B>0$. The corresponding matrix is
\[
M(p)=\frac{1}{p^2}
\left[\begin{matrix}
A^2 & AB\\
AB & B^2
\end{matrix}\right],
\]

If $d=2$, then $p=Ay_2+By_2y_1+Cy_1$ with $A,C>0$. Then we must have $B>0$ by the positivity of $p$. In fact,
\[
M(p)\sim \frac{1}{p^2}
\left[\begin{matrix}
B^2y_2^2 & AC\\
AC & B^2y_1^2
\end{matrix}\right],
\]

One can check easily that the Hessian matrices associated with these $p$ are semi-positive definite.

\paragraph{\bf II} $d_1=1$, $d_2=2$. Then $d=2,3$. Suppose $d=2$. The dominate polynomial could be $p=Ay_2^2+By_2y_1+Cy_1$ or $p=Ay_2^2+By_1$ with $A,B,C>0$. For the later case, the Hessian of $-\log(p)$ is
\[
\frac{1}{p^2}
\left[\begin{matrix}
B^2 & 2ABy_2\\
2ABy_2 & 2A^2y_2^2
\end{matrix}\right],
\] 
which is not semi-positive definite. 

For the other case, the dominant term of $M(p)$ is
\[
\frac{1}{p^2}
\left[\begin{matrix}
B^2y_2^2 & ABy_2^2\\
ABy_2^2 & 2A^2y_2^2+2ABy_1y_2+B^2y_1^2
\end{matrix}\right],
\] 
which is clearly semi-positive definite. So $p=Ay_2^2+By_1y_2+Cy_1$ is a candidate of our Weil--Petersson potential.

For $d=3$, the only possibility is $p=Ay_2^2+By_2^2y_1+Cy_1$ with $A,B,C>0$. The matrix $M(p)$ is semi-positive in this case. it is also a candidate of the potential.

\paragraph{\bf III} $d_1=1$, $d_2=3$. Then $d=3$. In this case, the dominate polynomials could be $Ay_2^3+By_2^2y_1+Cy_1$, $Ay_2^3+By_2y_1+Cy_1$, or $Ay_2^3+By_1$. The case $p=Ay_2^3+By_2^2y_1+Cy_1$ is similar to the previous case. The associated Hessian matrix is semi-positive definite.

For the last two cases, $M(p)$ are not semi-positive definite. The only possibility in this case is $p=Ay_2^3+By_2^2y_1+Cy_2$.

\paragraph{\bf IV} $d_1=d_2=2$. First we assume that $d=2$. $p=Ay_2^2+By_2y_1+Cy_1^2$ with the condition that $A>0$, $C>0$. Note that $p$ is homogeneous of degree two. If $p$ is positive for $y_i$ large enough, then $p$ must be positive for all $y_i>0$. To compute the lower bound of the eigenvalues of $M(p)$, we may assume $y_1^2+y_2^2=1$ via scalings. We calculate $M(p)$:
\begin{align*}
\frac{1}{p^2}
&\left[\begin{matrix}
2A^2y_2^2+2ABy_2y_1+(B^2-2AC)y_1^2 & ABy_2^2+4ACy_2y_1+BCy_1^2\\
ABy_2^2+4ACy_2y_1+BCy_1^2 & (B^2-2AC)y_2^2+2CBy_2y_1+2C^2y_1^2
\end{matrix}\right]\\
&=\frac{1}{p^2}
\left[\begin{matrix}
2Ap+(B^2-4AC)y_1^2 & Bp-(B^2-4AC)y_2y_1\\
Bp-(B^2-4AC)y_2y_1 & 2Cp+(B^2-4AC)y_2^2
\end{matrix}\right].
\end{align*}
We have 
\[
\det(M(p))=\frac{1}{p^2} (B^2-4AC).
\]
Being semi-positive definite, this forces $B^2-4AC\geq 0$. If $B^2-4AC>0$, then the eigenvalues of $M(p)$ have a positive lower bound on the compact set $K:=\{(y_1,y_2)\in\bbR_{\geq 0}^2:y_1^2+y_2^2=1\}$. Also note that $p$ is a complete square if $B^2-4AC=0$.

If $d=3$, there are three possibilities: $p=Ay_2^2+By_2^2y_1+Cy_1^2$, $Ay_2^2+By_2y_1^2+Cy_1^2$ or $Ay_2^2+By_2^2y_1+Cy_2y_1^2+Dy_1^2$. 

For $p=Ay_2^2+By_2^2y_1+Cy_1^2$ being positive, we have $A,B,C>0$. But then $M(p)$ can't be semi-positive definite. Indeed, $\det M(p)<0$ as $y_1$ large in this case. We can rule out this case. The second case is similar. There is only one possibility: $p=Ay_1^2+By_1^2y_2+Cy_1y_2^2+Dy_2^2$. In this case, we have $B,C>0$. As $y_1$, $y_2$ large, 
\begin{align*}
M(p)=\frac{1}{p^2}
\left[\begin{matrix}
C^2y_1^4+2B^2y_2^2y_1^2+2BCy_2y_1^3 & BCy_2^2y_1^2\\
BCy_2^2y_1^2 & B^2y_2^4+2BCy_2^3y_1+2C^2y_2^2y_1^2
\end{matrix}\right].
\end{align*}
This matrix is indeed positive definite.

\paragraph{\bf V} $d_1=3$, $d_2=2$. There are four candidates for the dominate polynomials, namely, $p=Ay_2^3+By_2^2y_1+Cy_2y_1^2+Dy_1^2$, $Ay_2^3+By_2^2y_1+Cy_1^2$, $Ay_2^3+By_2y_1^2+Cy_1^2$ and $Ay_2^3+By_1^2$. The condition making $M(p)$ semi-positive rules out the later two cases. Indeed, for these two cases, we have
\[
\det M(p)<0.
\]
For $p=Ay_2^3+By_2^2y_1+Cy_1^2$. $\p^2_{y_2}(p)\geq 0$ implies $B<0$. And $\det(M(p))\geq 0$ implies $B\geq 0$ by substituting $y_1={y_2}^{3/2}$. 

We only have to consider the case $p=Ay_2^3+By_2^2y_1+Cy_2y_1^2+Dy_1^2$ with $A,C>0$ and $BD\neq 0$. In this case, we have $M(p)=$
\begin{align*}
\frac{1}{p^2}
\left[\begin{matrix}
\begin{split} (B^2-2AC)y_2^4-2ADy_2^3\\+2BCy_2^3y_1+2C^2y_2^2y_1^2 \end{split} & ABy_2^4+4ACy_2^3y_1+BCy_2^2y_1^2\\
& ABy_2^4+4ACy_2^3y_1+BCy_2^2y_1^2 \begin{split}3A^2y_2^4&+4ABy_2^3y_1+2B^2y_2^2y_1^2\\&+2BCy_2y_1^3+C^2y_1^4\end{split}\\
\end{matrix}\right].
\end{align*}
For $A,D>0$, we must have $B^2-3AC\geq 0$ by looking at the determinant when $y_2 \gg y_1$.

\paragraph{\bf VI} $d_1=3$, $d_2=3$. For this case, $d=3$. Let $p=Ay_1^3+By_1^2y_2+Cy_1y_2^2+Dy_2^3$ with $A,D>0$ and $BC\neq 0$. If one of $B,C=0$, $M(p)$ will not be semi-positive definite. 
Note that $p$ is homogeneous of degree three. We can restrict ourselves on the compact set $K$. We observed that as $y_1\to 1^-$ and $y_2\to 0^+$, 
\begin{align*}
M(p)\to \frac{1}{p^2}
\left[\begin{matrix}
3A^2 & AB\\
AB & B^2-2AC
\end{matrix}\right].
\end{align*}
For this to be semi-positive, we must have $B^2-3AC\geq 0$. Similarly, using $y_1\to 0^+$ and $y_2\to 1^-$, we have $C^2-3BD\geq 0$.
A brute force shows that 
\[
\det(M(p))=\frac{2}{f^2}((B^2-3AC)y_1^2+(BC-9AD)y_1y_2+(C^2-3BD)y_2^2).
\]
Note that it is also required that $\det(M(p))\geq 0$.

We make a further substitution. $p$ is a real polynomial of homogeneous degree 3. It has a linear factor over $\bbR$. Note that $p>0$ on the first quadrant. So the factor must be of the form $ty_1+sy_2$ with $s,t>0$. We decompose
\[
p(y_1,y_2)=(ty_1+sy_2)(ay_1^2+by_1y_2+cy_2^2).
\]
We also have $a,c>0$ and $ay_1^2+by_1y_2+cy_2^2>0$ in the first quadrant. The coefficients are related by $A=at$, $B=tb+sa$, $C=tc+sb$ and $D=sc$.

\rm{VI-(a)}: $B^2-3AC>0$, $C^2-3BD>0$ and $BC>0$. Then $BC-9AD>0$. 

\rm{VI-(b)}: $B^2-3AC>0$, $C^2-3BD>0$ and $BC<0$. We will show that the first eigenvalue on $K$ has a positive lower bound in this case. Note that we have $BC-9AD<0$. For $M(p)$ being semi-positive definite, we have 
\[
(BC-9AD)^2-4(B^2-3AC)(C^2-3BD)\leq 0.
\]
If ``$<$" occurs, we are in the previous situation, i.e., the first eigenvalue is bounded below. Suppose ``$=$" holds. Making a further substitution, we have
\[
-3(b^2-4ac)(as^2-bst+ct^2)^2=0.
\]
In either case, we can write $p=p_1^2p_2$, i.e., $p$ factors completely into linear factors with a square factor. But this forces $BC>0$ and contradicts to our assumption.

\rm{VI-(c)}: $B^2-3AC=0$ and $C^2-3BD>0$. In this case, $BC-9AD>0$, otherwise $\det M(p)$ is not positive. 

\rm{VI-(d)}: $B^2-3AC=0$ and $C^2-3BD=0$. Then $p=q^3$ for some polynomial $q$.

We summarize the results in this section by the following table:
\begin{center}
\begin{tabular}{|c|c|c|c|l|l|}
\hline 
Case & $d_1$ & $d_2$ & $d$ & Dominate polynomial $p$ & Further conditions \\\hline  
(i) &	1 & 1 & 1 & $Ay_2+By_1$ & $A,B>0$    \\\hline
(ii) &	1 & 1 & 2 & $Ay_2+By_2y_1+Cy_1$ & $A,B,C>0$ \\\hline
(iii) &	1 & 2 & 2 & $Ay_2^2+By_2y_1+Cy_1$ & $A,B,C>0$ \\\hline
(iv) &	1 & 2 & 3 & $Ay_2^2+By_2^2y_1+Cy_1$ & $A,B,C>0$ \\\hline
(v) &	1 & 3 & 3 & $Ay_2^3+By_2^2y_1+Cy_1$ & $A,B,C>0$ \\\hline
(vi) & 2 & 2 & 2 & $Ay_2^2+By_2y_1+Cy_1^2$ & $A,C>0$, $B^2-4AC\geq 0$ \\\hline
(vii) & 2 & 2 & 3 & $Ay_2^2+By_2^2y_1+Cy_1^2y_2+Dy_1^2$ & $A,B,C,D>0$ \\\hline
(viii) & 2 & 3 & 3 & $Ay_2^3+By_2^2y_1+Cy_1^2y_2+Dy_1^2$ & $A,C,D>0$,  $B^2-3AC\geq 0$ \\\hline
(ix)  & 3 & 3 & 3 & $Ay_2^3+By_2^2y_1+Cy_1^2y_2+Dy_1^3$ & $A,D>0$, $B^2-3AC\geq 0$,\\ 
	   & & & & &  and $C^2-3BD\geq 0$ \\\hline
\end{tabular}\\
\end{center}

\subsection{Toward the Weil--Petersson distance}
In this section, we will show that the conjecture holds under further assumptions (cf.~Introduction).

\subsubsection{$E_1$ is infinite and $E_2$ is finite} In this paragraph, we will show that the Weil--Petersson distance of the intersection of divisors {\it along the angular slices}; namely, $\Re{z_j}=c_j$ on $\bbH^2\times(\Delta)^{r-2}$ for constants $c_j$'s, are infinite. More precisely, we have

\begin{thm}\label{main3}
Suppose $s\in S$ lies on exactly two boundary divisors, say $E_1$ and $E_2$, with $E_1$ infinite and $E_2$ finite. Then $s$ has infinite Weil--Petersson distance along the angular slices.
\end{thm}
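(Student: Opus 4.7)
My plan is to adapt the Cauchy--Schwarz argument from the proof of Theorem 1.2 to the two-parameter setting on the angular slice, using the finite-divisor hypothesis on $E_2$ to control the extra direction $y_2$.

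By the analysis of Section 3.1 (applied to ``$E_1$ infinite, $E_2$ finite''), the potential admits the decomposition
$\tilde Q = \mathbf p(y_1) + p_1(y_1) + \mathbf H_{12}$,
where $\mathbf p(y_1) = \sum_{l\le d}s_l(z_3,\ldots,z_r)\,y_1^l$ is a polynomial in $y_1$ alone of degree $d\ge 1$, with $s_d$ analytic and nonzero near $s$, and where $p_1$ and $\mathbf H_{12}$ are exponentially small in $y_2$. Consequently every $z_2$-derivative of $\tilde Q$ vanishes exponentially. The computation of Lemma 1.5 gives $h_{1\bar 1}\sim d/y_1^2$; the computation of Lemma 1.6 gives $|h_{1\bar j}|\le C_j/y_1^2$ for $j\ge 3$; and the same leading-cancellation argument, together with the observation that the leading-in-$y_1$ term of $\p_{\bar 2}(R/\mathbf p)$ (where $R := p_1+\mathbf H_{12}$) is itself $y_1$-independent, yields the refined bounds
\[
|h_{1\bar 2}|\le C_2\, e^{-\alpha y_2}/y_1^2 \quad\text{and}\quad 0\le h_{2\bar 2}\le C\, e^{-\alpha y_2}
\]
for some $\alpha>0$.

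Applying Cauchy--Schwarz to each off-diagonal as in the proof of Theorem 1.2, and using $|dz_2|=|dy_2|$ on the angular slice, we obtain, for $\epsilon>0$ small enough that $A := d-\epsilon^2(r-1)>0$,
\[
g_{WP}\ge \frac{A}{y_1^2}\,dz_1\otimes d\bar z_1 + (\text{semi-positive in } z_2,\ldots,z_r)
 - \sum_{j\ge 3}\frac{C_j^2}{\epsilon^2 y_1^2}\,dz_j\otimes d\bar z_j
 - \frac{C_2^2\, e^{-2\alpha y_2}}{\epsilon^2 y_1^2}\,dy_2\otimes dy_2.
\]
For any smooth real curve $\gamma\colon [0,1)\to$ angular slice with $\gamma(t)\to s$ as $t\to 1$: the main term contributes $\int(\sqrt A/y_1)\,|\dot y_1|\,dt\ge \sqrt A\log\bigl(y_1(t)/y_1(0)\bigr)\to\infty$; the $j\ge 3$ error terms integrate to bounded quantities exactly as in Theorem 1.2; and the new $y_2$-error, after the substitution $u=e^{-\alpha y_2}$, becomes $\frac{C_2^2}{\alpha^2\epsilon^2}\int \dot u^2/y_1^2\,dt$, which is finite because $u$ is bounded and $u(t)\to 0$ while $1/y_1(t)^2\to 0$. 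Assembling these contributions via $\sqrt{a-\sum b_i}\ge \sqrt a-\sum\sqrt{b_i}$ (valid whenever the LHS is real) gives $\int_\gamma ds=\infty$.

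The principal obstacle is establishing the refined estimate $|h_{1\bar 2}|\le C_2\, e^{-\alpha y_2}/y_1^2$: the naive Lemma 1.6 bound $|h_{1\bar j}|\le C_j/y_1^2$ does not suffice for $j=2$, because the resulting $y_2$-error integral $\int |\dot y_2|^2/y_1^2\,dt$ is generically divergent along curves reaching $s$. The exponential decay in $y_2$ must be extracted from the fact that $\tilde Q$ depends on $z_2$ only through the exponentially small pieces $p_1$ and $\mathbf H_{12}$ --- which is the precise manifestation of the hypothesis ``$E_2$ is a finite divisor''.
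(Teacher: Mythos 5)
Your structural input is the right one: since $E_2$ is finite, $\tilde{Q}$ depends on $z_2$ only through the exponentially small pieces $p_1$ and $\mathbf{H}_{12}$, and your refined bound $|h_{1\bar 2}|\lesssim e^{-\alpha y_2}/y_1^2$ (via the leading-order cancellation, since $\deg_{y_1}\partial_{\bar 2}R\le\deg_{y_1}\mathbf{p}$) is the correct quantitative form of this. But your route then diverges from the paper's: you absorb the off-diagonal terms into the diagonal by Cauchy--Schwarz, exactly as in Theorem \ref{Thm1-2}, whereas the paper keeps the cross term attached to $dy_1$, completes the square to get a lower bound $(\bullet)\,|dy_1+e^{-ry_2}g\,dy_2|/(y_1-e^{-ry_2}g/r)$, and recognizes this as $d\log|y_1-e^{-ry_2}g/r|$ plus correction one-forms whose coefficients carry extra factors of $1/y_1^2$ or $e^{-y_2}$. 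The divergence then comes from evaluating the logarithm at the endpoints of $\gamma$, which is path-independent; the first-order $y_2$-dependence of the cross term is absorbed into the exact differential rather than estimated.

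This difference is where your gap lies. After invoking $\sqrt{a-\sum b_i}\ge\sqrt{a}-\sum\sqrt{b_i}$, the quantity you must show is finite is $\int_\gamma\sqrt{b_2}\,dt=\tfrac{C_2}{\epsilon}\int_\gamma e^{-\alpha y_2}|\dot y_2|/y_1\,dt=\tfrac{C_2}{\alpha\epsilon}\int_\gamma|\dot u|/y_1\,dt$, not $\int\dot u^2/y_1^2\,dt$ (the latter is the un-square-rooted metric term and is not even reparametrization invariant). More seriously, the justification ``finite because $u$ is bounded and $u\to 0$'' is not valid for either integral: a bounded function tending to $0$ can have infinite total variation (e.g.\ $u(t)=(1-t)\bigl(2+\sin\tfrac{1}{1-t}\bigr)$), and one can let $y_1\to\infty$ slowly enough that $\int|\dot u|/y_1=\infty$, so your lower bound degenerates to $\infty-\infty$ on such curves. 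To close this you need either the paper's exact-differential device, or an a priori reduction to curves with $\int_\gamma|du|<\infty$ (say by assuming $L(\gamma)<\infty$ and bootstrapping). The analogous worry for the $j\ge 3$ error terms $\int|dz_j|/y_1$ is already present in the paper's own proof of Theorem \ref{Thm1-2} and I would not count it against you; the $y_2$-term, however, is the whole point of this theorem, and as written your treatment of it does not go through.
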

\begin{proof}
For simplicity, we start with the case $r=2$ first. The general case will follow by the same method. In light of \eqref{pert}, if we restrict on the angular slices, only the real part of $b$ contributes to the distance function. We may assume the perturbation matrix $E$ is of the form
\[
\begin{bmatrix}
0 & e^{-ry_2}g(y_1,y_2)\\
e^{-ry_2}g(y_1,y_2) & \star
\end{bmatrix}
\]
and the Weil--Petersson metric (up to a constant) is given by
\[
\begin{bmatrix}
1/y_1^2 & 0\\
0 & 0
\end{bmatrix}+
\begin{bmatrix}
0 & e^{-ry_2}g\\
e^{-ry_2}g & \star
\end{bmatrix}
=\begin{bmatrix}
1/y_1^2 & e^{-ry_2}g\\
e^{-ry_2}g & \star
\end{bmatrix}.
\]
Here the function $g$ is a real convergent series in $1/y_1$ and $e^{-y_2}$ and is bounded at infinity. Completing the square and using the semi-positivity of the metric, then, up to a constant, we have 
\begin{align}\label{fin1}
\begin{split}
\eqref{WPmetric}\geq (\bullet)\frac{|dy_1+e^{-ry_2}gdy_2|}{y_1}&\geq (\bullet)\frac{|dy_1+e^{-ry_2}gdy_2|}{y_1-e^{-ry_2}g/r}\\
&\geq (\bullet)\frac{dy_1+e^{-ry_2}gdy_2}{y_1-e^{-ry_2}g/r}.
\end{split}
\end{align}
Here $(\bullet)$ are positive constants (they may be different, but we shall use the same notation to avoid introducing new notations). 
\[
d(e^{-ry_2}g)=-re^{-ry_2}gdy_2+e^{-ry_2}(\p_1 gdy_1+\p_2g dy_2).
\]
The right hand side of \eqref{fin1} (without the constant) becomes
\begin{align}\label{fin2}
d\log|y_1-e^{-ry_2}g/r|-\frac{e^{-ry_2}(\p_1 gdy_1+\p_2g dy_2)}{y_1-e^{-ry_2}g/r}.
\end{align}
The function $\p_1 g$ can be written as $g_1/y_1^2$ with $g_1$ bounded at infinity. Hence 
\[
\left|\int_\gamma \frac{e^{-ry_2}(\p_1 g)dy_1}{y_1-e^{-ry_2}g/r}\right|<\infty.
\]
Similarly, the integration of the second term along any $\gamma$ is bounded. Therefore, by \eqref{fin2}, the Weil--Petersson distance along the angular slices is infinite since 
\[
\log|y_1-e^{-ry_2}g/r|\to\infty,~~~y_1,y_2\to\infty.
\]
We thus proved the case $r=2$. 

Using the same technique, the general cases will follow similarly: We adapt the notations in {\bf Section 2}. Let $s\in E_1\cap E_2$ with $E_1$ infinite, $E_2$ finite and $s\notin E_j$ for other $j$. Consider the real part of the matrix, $\Re(-\p_i\p_{\bar j}\log\tilde{Q})$, and restrict to the angular slice. Using $\bp\to y_1^{2D_1}$, we have
\[
\Re(-\p_i\p_{\bar j}\log\tilde{Q})\geq (\bullet) \frac{1}{y_1^2}(a_{ij}).
\]
The matrix $(a_{ij})$ has the following property: $(a_{ij})$ is semi-positive definite. $a_{11}$ is a non-zero constant. $a_{12}$ is of the form $e^{-ry_2}b_{12}$, where $b_{12}$ is a convergent power series in $1/y_1$, $e^{-y_2}$ and $y_j$ for $j\geq 3$. For $i\geq 3$, $a_{1i}$ are convergent power series in $1/y_1$, $e^{-y_2}$ and $y_j$ for $j\geq 3$. Completing the square, we have
\[
(dy_1+\sum_{i\neq 1}a_{1i}dy_i)^2-(\sum_{i\neq 1}a_{1i}dy_i)^2+\sum_{i,j\geq 2}a_{ij}dy_i\otimes dy_j.
\]
The sum of the later two terms is nonnegative. Then, restricting to the angular slices, 
\begin{align*}
\eqref{WPmetric}\geq (\bullet)\frac{|dy_1+\sum_{i\neq 1}a_{1i}dy_i|}{y_1}&\geq (\bullet)\frac{|dy_1+\sum_{i\neq 1}a_{1i}dy_i|}{y_1-a_{12}/r-\sum_{i\neq 1,2}a_{1i}}\\
&\geq  (\bullet)\frac{dy_1+\sum_{i\neq 1}a_{1i}dy_i}{y_1-a_{12}/r-\sum_{i\neq 1,2}a_{1i}}
\end{align*}
as we did in \eqref{fin1}. Note that $y_1-a_{12}/r-\sum_{i\neq 1,2}a_{1i}\to \infty$ as $y_1,y_2\to\infty$, $y_i\to 0$ for $i\geq 3$. The integration of the difference
\[
d\log\left|y_1-\frac{a_{12}}{r}-\sum_{i\neq 1}a_{1i}\right|-\frac{dy_1+\sum_{i\neq 1}a_{1i}dy_i}{y_1-a_{12}/r-\sum_{i\neq 1,2}a_{1i}}
\]
along any real curve $\gamma$ in the angular slice is finite. Indeed, we only have to care about $y_1$ and $y_2$ directions since the range of $y_i$'s are bounded for $i\geq 3$. Note that $dy_1$ is canceled out from the subtraction. The other terms are always involved $1/y_1^2$ or $e^{-y_2}$ as \eqref{fin2}. A similar computation shows that they are all finite. 
\end{proof}

\begin{remark}
The total distance; namely, involving $x_i$ direction, is not easy to compute. In general, a semi-positive definite (hermitian) perturbation of an infinite distance metric may have finite distance (cf.~{\it Example} \ref{rem1}). There should be some constrains for these perturbation, at least for those potential $-\log\tilde{Q}$ coming from geometric families.
\end{remark}

\subsubsection{$E_1$, $E_2$ are infinite} In this case, we will show that:

\begin{prop}\label{prop}
In the case of Calabi--Yau threefolds, suppose $s\in S$ lies exact on two boundary divisors, say $s\in E_1\cap E_2$, with $E_i$'s being infinite. Then it has infinite distance measured by the dominant term of the candidates of the Weil--Petersson potentials, which is denoted by $g_{cWP}$.
\end{prop}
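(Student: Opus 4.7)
The plan is a case analysis based on the classification of candidate dominant polynomials $p(y_1,y_2)$ given in the table in Section~3.1. For each of the nine cases (i)--(ix), the task is to verify that the Hessian metric
\[
g_{cWP}=-\partial_i\partial_{\bar j}\log p
\]
on the cover $\bbH^2$ has infinite distance from any interior point to the limit $y_1,y_2\to\infty$ that corresponds to $s$. Because $p$ depends only on $(y_1,y_2)$, one has $\partial_i\partial_{\bar j}(-\log p)=\tfrac14\partial_{y_i}\partial_{y_j}(-\log p)$, and the full K\"ahler distance dominates the distance induced by the real Hessian metric on the quadrant $\bbR_{>0}^2$; so it is enough to work with the latter.

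Writing $P_i=\partial_{y_i}p$ and $H_{ij}=\partial_{y_i}\partial_{y_j}p$, a short computation gives the matrix of the real Hessian metric as $g_{ij}=(P_iP_j-pH_{ij})/p^{2}$, and a direct calculation of $g^{-1}$ paired against $\nabla\log p$ yields the closed form
\[
|\nabla\log p|_g^{2}\;=\;\frac{\mathrm{adj}(H)(P,P)}{\mathrm{adj}(H)(P,P)-p\det H}.
\]
The universal tool I will use is the Cauchy--Schwarz gradient bound: if $\sup|\nabla\log p|_g\leq C$ on the quadrant, then for any path $\gamma$ one has $L_g(\gamma)\geq |\log p(\gamma(T))-\log p(\gamma(0))|/C$, which blows up as soon as $p\to\infty$ along $\gamma$, as happens for any path approaching $s$. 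So the entire argument reduces to proving $|\nabla\log p|_g^{2}$ is uniformly bounded on the quadrant in each of the nine cases.

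For the three homogeneous cases (i), (vi) and (ix), the Euler identities $\sum P_jy^j=dp$ and $\sum H_{ij}y^j=(d-1)P_i$ immediately give $g_{ij}y^j=P_i/p=\partial_{y_i}\log p$; hence $y$ solves $gw=\nabla\log p$ and $|\nabla\log p|_g^{2}=g(y,y)=d$ identically on the quadrant, yielding infinite distance at once. For each of the six non-homogeneous cases (ii)--(v), (vii), (viii), I will substitute the explicit $p$ from the table into the closed form above: both the numerator and denominator are polynomials of the same weighted degree in $y_1,y_2$, whose ratio tends (as $y_1,y_2\to\infty$) to the weighted degree $d$ of the leading part of $p$. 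A uniform upper bound on the ratio over the whole quadrant then follows by a direct estimate using the sign conditions recorded in the table (e.g.\ $B^2-4AC\geq 0$ in (vi), $B^2-3AC\geq 0$ in (viii) and (ix)), which were themselves introduced precisely to ensure semi-positivity of $g$.

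The hard part, I expect, will be the collapsed sub-cases in which the Hessian drops rank: $B^2=4AC$ inside (vi) (then $p=(\sqrt Ay_2+\sqrt Cy_1)^2$) and the extreme sub-cases of (ix) where the cubic degenerates to $q^3$. In those situations the identity for $|\nabla\log p|_g^{2}$ degenerates to $0/0$ and the metric is genuinely rank one. However, in each of these degenerate cases $p$ is a perfect power $q^k$ of a lower-degree polynomial $q$, so $-\log p=-k\log q$ and the Hessian metric of $-\log p$ equals $k$ times the Hessian metric of $-\log q$; the latter falls under case (i), for which the rank-one argument $L_g(\gamma)\geq\sqrt{k}\,|\log q(\gamma(T))-\log q(\gamma(0))|$ already forces infinite distance, and the degenerate sub-cases are dispatched.
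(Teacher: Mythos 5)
Your route is genuinely different from the paper's. The paper proves the proposition by going through the table case by case and, in each case, completing squares in the explicit matrix $M(p)$ so as to exhibit an exact logarithmic differential (e.g.\ $d\log(Ay_2+By_1)$, $d\log(y_1y_2)$, or an eigenvalue lower bound on the compact set $K$ for the homogeneous nondegenerate cases) whose integral diverges. Your unifying device --- bound $|\nabla\log p|_{g}$ and conclude $L_g(\gamma)\geq C^{-1}|\log p(\gamma(T))-\log p(\gamma(0))|\to\infty$ --- is correct as stated (the right formulation for possibly degenerate $g$ is $d\log p\otimes d\log p\leq C^2 g$ as quadratic forms), and for the three homogeneous cases (i), (vi), (ix) the Euler identities give $g_{ij}y_j=\partial_{y_i}\log p$ and $g(y,y)=d$, hence $|d\log p(v)|\leq\sqrt{d}\,|v|_g$ by Cauchy--Schwarz \emph{for the semi-definite form} $g$. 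This is cleaner than the paper's treatment and, note, it already covers the rank-one sub-cases $B^2=4AC$ and the degenerate sub-cases of (ix) without your separate perfect-power detour; your worry there is unnecessary. Your closed form $|\nabla\log p|_g^2=\mathrm{adj}(H)(P,P)/(\mathrm{adj}(H)(P,P)-p\det H)$ also checks out where $H$ is invertible.

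The gap is in the six inhomogeneous cases, which is where all the actual work of the proposition lives. You assert that the ratio ``is uniformly bounded on the quadrant'' and that this ``follows by a direct estimate using the sign conditions,'' but neither half is carried out, and the first claim is literally false as stated. Take case (iv), $p=Ay_2^2+By_2^2y_1+Cy_1$: in the regime $y_1\gg y_2$ one computes $\mathrm{adj}(H)(P,P)=2(By_2^2+C)(A+By_1)(C-3By_2^2)$ and the ratio reduces to $(3By_2^2-C)/(By_2^2-C)$, which blows up as $y_2^2\to C/B$ (indeed $g$ fails to be semi-positive for $y_2^2<C/B$, which is consistent with condition (2') holding only for $y_1,y_2$ large). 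So the bound can only be claimed on a region $\{y_1\geq M,\ y_2\geq M\}$ with $M$ large, and establishing it there requires examining the several asymptotic regimes ($y_1\gg y_2$, $y_2\gg y_1$, comparable) separately in each of cases (ii)--(v), (vii), (viii) --- i.e.\ essentially the same case-by-case labor the paper performs, just organized around a different quantity. Until those verifications are done, the proposal proves the proposition only for the homogeneous cases (i), (vi), (ix).
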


\begin{proof}
We will analyze all the cases described in {\bf Section 3.1}. Again, for simplicity we assume $r=2$ at this moment.

{\bf Case (i)} $p=Ay_2+By_1$. This is easy since for any $\gamma$ in that slice
\begin{align*}
L_{g_{cWP}}(\gamma)&=\int_\gamma \frac{\sqrt{A^2|dz_2|^2+2AB\Re(dz_1\otimes d{\bar z}_2)+B^2|dz_1|^2}}{Ay_2+By_1}\\
&=\int_\gamma\frac{|Adz_2+Bdz_1|}{Ay_2+By_1}\geq \int_\gamma\frac{Ady_2+Bdy_1}{Ay_2+By_1}=\log(Ay_2+By_1)=\infty.
\end{align*}

{\bf Case (ii)} For the case $p=Ay_2+By_2y_1+Cy_1$, we have
\begin{align*}
L_{g_{cWP}}(\gamma)&\geq (\bullet)\int_\gamma \frac{\sqrt{B^2y_2^2|dz_1|^2+B^2y_1^2|dz_2|^2}}{|p|}\\
&\geq (\bullet)\int_\gamma \frac{By_2|dz_1|+By_1|dz_2|}{\sqrt{2}|p|}\\
&\geq (\bullet)\int_\gamma \frac{y_2dy_1+y_1dy_2}{y_1y_2}=(\bullet)(\log(y_1)+\log(y_2))=\infty,
\end{align*}
for suitable positive constants. The notation $(\bullet)$ is the same as we explained in section {\it 3.2.1}.

{\bf Case (iii)} $p=Ay_2^2+By_2y_1+Cy_1$. It suffices to consider its dominated term $y_2(Ay_2+By_1)$. The metric induced by $p$ is indeed a sum of two semi-positive matrices, which have infinite distance. Therefore, it has infinite distance.

{\bf Case (iv)} This case is similar to {\bf Case (ii)}. We omit the computation here.

{\bf Case (v)} $p=Ay_2^3+By_2^2y_1+Cy_1$, with $A,B,C>0$. The dominated term of $p$ is $Ay_2^3+By_2^2y_1=y_2^2(Ay_2+By_1)$. Hence it has infinite distance as in {\bf Case (iii)}.

{\bf Case (vi)} In this case, if $d=2$, we have either $B^2-4AC>0$ or $B^2-4AC=0$. We deal with the case $B^2-4AC>0$ first. Let
\[
K:=\{(y_1,y_2)\in\mathbb{R}^2:y_i\geq 0,~y_1^2+y_2^2=1\}.
\]
The corresponding matrix has positive eigenvalues on the compact set $K$. Let $\lambda$ be its minimum value on $K$. Let $R(y_1,y_2)=\sqrt{y_1^2+y_2^2}$ and $\eta_j=y_j/R$. Then $p=R^2(A\eta_2^2+B\eta_2\eta_1+C\eta_1^2)$. Note that $A\eta_2^2+B\eta_2\eta_1+C\eta_1^2$ is positive on $K$. Then, for any $\gamma$, 
\[
L_{g_{cWP}}(\gamma)\geq (\bullet) \int_\gamma \frac{dy_1+dy_2}{\sqrt{y_1^2+y_2^2}}\geq (\bullet)\int_\gamma \frac{dy_1+dy_2}{y_1+y_2}=(\bullet)\log(y_1+y_2)=\infty.
\]
If $B^2=4AC$, then $p=q^2$. Then it has infinite distance as we discuss above.
\paragraph{\bf Case (vii)} $p=Ay_2^2+By_2^2y_1+Cy_2y_1^2+Dy_1^2$ with $B,C>0$. Its dominated term is $By_2^2y_1+Cy_2y_1^2=y_2y_1(By_2+Cy_1)$. The metric $g_{cWP}$ decomposes into three semi-positive definite matrices and all of them have infinite distance.

{\bf Case (viii)} $p=Ay_2^3+By_2^2y_1+Cy_2y_1^2+Dy_1^2$ with $A,C,D>0$, $B\neq 0$ and $B^2-3AC\geq 0$. The dominated term is $y_2(Ay_2^2+By_2y_1+Cy_1^2)$. The metric matrix is 
\begin{align*}
\frac{1}{p^2}
\left[\begin{matrix}
\begin{split} (B^2-2AC)y_2^4\\+2BCy_2^3y_1+2C^2y_2^2y_1^2 \end{split} & ABy_2^4+4ACy_2^3y_1+BCy_2^2y_1^2\\
& ABy_2^4+4ACy_2^3y_1+BCy_2^2y_1^2 \begin{split}3A^2y_2^4&+4ABy_2^3y_1+2B^2y_2^2y_1^2\\&+2BCy_2y_1^3+C^2y_1^4\end{split}\\
\end{matrix}\right].
\end{align*}
A direct computation gives
\[
\det(M(p))=\frac{2((B^2-3AC)y_2^2+BCy_2y_1+C^2y_1^2)}{(Ay_2^3+By_2^2y_1+Cy_2y_1^2)^2}.
\]
If $B^2-3AC>0$, then as we did in {\bf Case (vi)}, its first eigenvalue has a positive lower bound on $K$. A parallel argument shows that it has infinite distance. 

If $B^2-3AC=0$, then $B>0$. $4ACy_2^3y_1$ and $BCy_2^2y_1^2$ can be dominated by the diagonal terms. In fact,
\[
2\sqrt{3AC}Cy_2^3y_1+4A\sqrt{3AC}y_2^3y_1\geq 4\sqrt{6}ACy_2^3y_1>8ACy_2^3y_1.
\]
It suffices to show that the matrix
\begin{align*}
\frac{1}{p^2}
\left[\begin{matrix}
\begin{split} ACy_2^4+\eta BCy_2^3y_1+C^2y_2^2y_1^2 \end{split} & ABy_2^4\\
& ABy_2^4\begin{split}3A^2y_2^4&+\eta ABy_2^3y_1+B^2y_2^2y_1^2\\&+2BCy_2y_1^3+C^2y_1^4\end{split}\\
\end{matrix}\right].
\end{align*}
gives an infinite distance on any curve. Here $\eta>0$ is a constant. The key point here is that when we compute the distance,  the off-diagonal terms $2ABy_2^4\Re(dz_1\otimes d{\bar z}_2)$ with the terms $ACy_2^4|dz_1|^2$, $3A^3y_2^4|dz_2|^2$ in diagonal fit together and equal to 
\[
|\sqrt{AC}y_2^2dz_1+\sqrt{3}Ay_2^2dz_2|^2
\]
and all remaining terms are positive. Therefore, 
\begin{align*}
L_{g_{cWP}}(\gamma)&\geq (\bullet) \int_\gamma \frac{(2Ay_2+By_1)dy_2+(2Cy_1+By_2)dy_1}{Ay_2^2+By_1y_2+Cy_1^2}+\frac{dy_2}{y_2}\\
&\geq (\bullet)\log|Ay_2^2+By_1y_2+Cy_1^2|+(\bullet)\log|y_2|=\infty.
\end{align*}

{\bf Case (ix)} The first eigenvalue in the cases \rm{(a),(b)} in section 3.1 have positive lower bound. Hence they all have infinite distance. \rm{(c)} is similar to the second part of {\bf Case (viii)}. The same proof shows that it has infinite distance, too. For the last case, we have $p=q^3$ for some homogeneous degree one polynomial $q$. Then $\p_i\p_j \log p = 3\p_i\p_j \log q$ and it is reduced to {\bf Case (i)}.

Note that for a higher dimensional base $S$, the dominant term of the candidate of the Weil--Petersson potential is the same as the one for two parameter families. With these discussions, this completes the proof of {\bf Proposotion \ref{prop}}.
\end{proof}
\begin{remark}
The statement of {\bf Proposition} \ref{prop} holds true trivially in the situation that $s\in\cap_i E_i$ with $E_1$ infinite and $E_i$ finite for $i\geq 2$. In this case, the dominant term of the candidates of Weil--Petersson potential is just a polynomial in one variable with positive leading term. Therefore,
\[
L_{g_{cWP}}(\gamma)= (\bullet)\int_\gamma \frac{dy_1}{y_1}=\infty.
\]
\end{remark}

\subsubsection{Further Results} For two parameter families, we can go further in some cases. For instance, in {\bf Case (ii)}, the diagonal of the metric matrix is large enough to dominate {\it all} the off-diagonal terms. To be precise, we observe that the metric matrix is
\begin{align}\label{case2}
\frac{1}{p^2}
\left[\begin{matrix}
B^2y_2^2 & AC\\
AC & B^2y_1^2
\end{matrix}\right].
\end{align}
Also, we have the inequalities:
\begin{align*}
&-2AC\left|dz_1\otimes d{\bar z}_2\right|\geq-\left(A^2|dz_1|^2+C^2|dz_2|^2\right)\\
&-C_1\left|e^{-y_2}\frac{1}{p^2}dz_1\otimes d{\bar z}_2\right|\geq -\frac{C_1}{2}\left(\frac{e^{-y_2}}{p^2}|dz_1|^2+\frac{e^{-y_2}}{p^2}|dz_2|^2\right)\\
&-C_2\left|e^{-y_1}\frac{1}{p^2}dz_1\otimes d{\bar z}_2\right|\geq -\frac{C_2}{2}\left(\frac{e^{-y_1}}{p^2}|dz_1|^2+\frac{e^{-y_1}}{p^2}|dz_2|^2\right)\\
&-C_3\left|\frac{e^{-(y_1+y_2)}}{p^2}dz_1\otimes d{\bar z}_2\right|\geq -\frac{C_3}{2}\left(\frac{e^{-(y_1+y_2)}}{p^2}|dz_1|^2+\frac{e^{-(y_1+y_2)}}{p^2}|dz_2|^2\right).
\end{align*}
As $y_2$ large, we can use the diagonal of \eqref{case2} to dominate all the right hand sides of the inequalities simultaneously. Thus
\begin{align*}
L_{g_{WP}}(\gamma)&\geq (\bullet)\int_\gamma \frac{By_2|dz_1|+By_1|dz_2|}{By_2y_1}\\
&\geq (\bullet)\int_\gamma \frac{By_2|dy_1|+By_1|dy_2|}{By_2y_1}\\
&\geq (\bullet)\int_\gamma \frac{By_2dy_1+By_1dy_2}{By_2y_1}=(\bullet)\log(y_1y_2)=\infty.
\end{align*}

{\bf Case (iii)-(v)} and {\bf (vii)} are similar. For the {\bf Case (vi)}, if $B^2-4AC>0$, we can also use the diagonal terms to dominate the off-diagonal terms because the first eigenvalue is strictly positive on $K$. Let $\lambda>0$ be the minimal eigenvalue of $(-\p_i\p_j\log\bp)$. Then 
\[
(-\p_i\p_j\log\bp)\geq \frac{1}{R^2}\begin{bmatrix}\lambda & 0\\ 0 & \lambda\end{bmatrix},~~R=\sqrt{y_1^2+y_2^2}.
\]
Using the same technique as above, we can dominate the last three terms in \eqref{off-dia} by $\lambda/R^2$ as $y_i$ large. Hence
\[
(-\p_i\p_{\bar j}\log\tilde{Q})\geq \frac{1}{2R^2}\begin{bmatrix}\lambda & 0\\ 0 & \lambda\end{bmatrix}.
\]
A direct computation shows that it has infinite distance. {\bf Case (viii)-(ix)} are similar if all ``$\geq$" are replaced by ``$>$". As a corollary, we have

\begin{cor}\label{cor2}
Let $E_1$, $E_2$ be infinite divisors and $N_1$, $N_2$ be the logarithmic part of the monodromy operators. If $\{D_1,D_2\}=\{1,2\}$ or $\{1,3\}$, then the intersection point has infinite Weil--Petersson distance.

In fact, if the dominant term of the Weil--Petersson potential is of the form described in ${\bf Case (ii)-(ix)}$ with all ``$\geq$" replaced by ``$>$", then it is at infinite distance. 
\end{cor}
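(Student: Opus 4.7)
The plan is to absorb the full perturbation $E$ from \eqref{pert} --- together with the explicit error terms $C_1 e^{-y_2} y_1^{2D_1-2}/\bp^2$, $C_2 e^{-y_1} y_2^{2D_2-2}/\bp^2$ and $C_3 e^{-(y_1+y_2)}/\bp^2$ identified in \eqref{off-dia} --- into the leading matrix $M(p)$ case by case through the classification of Section 3.1. Once this absorption is in place, the infinite-distance statements for $M(p)$ obtained in the proof of Proposition \ref{prop} transfer directly to the actual Weil--Petersson metric $-\p_i\p_{\bar j}\log\tilde Q$, proving both halves of the corollary. As in Theorem \ref{main3}, the general $r$-dimensional situation reduces to $r=2$, because the dominant monomials of $\bp$ are unchanged by adding more variables and the remaining coordinates contribute only finite bounded pieces.

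The mechanism is a two-step Cauchy--Schwarz/AM--GM estimate: each mixed term $-2\Re(a_{ij}\,dz_i\otimes d\bar z_j)$ in the metric is bounded by $\epsilon\,|dz_i|^2+\epsilon^{-1}|a_{ij}|^2\,|dz_j|^2$, after which one compares the cost $\epsilon^{-1}|a_{ij}|^2$ against the diagonal entry of $M(p)$. In Cases (ii), (iii), (iv), (v), (vii) --- which cover in particular the subcases $\{D_1,D_2\}=\{1,2\}$ and $\{1,3\}$ of the first assertion --- the diagonal of $M(p)$ is already a single dominant monomial of the form $B^2 y_2^{2D_2}/\bp^2$ or $B^2 y_1^{2D_1}/\bp^2$, and a direct monomial count shows that every error term in \eqref{off-dia} and in $E$, each carrying a factor $e^{-y_i}$ against a polynomial factor of degree at most $2D_j-2$, is negligible compared to it. The four displayed inequalities worked out for Case (ii) at the start of Section 3.2.3 are the template, and the remaining members of this family follow from the same monomial inspection.

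For Cases (vi), (viii), (ix) the diagonal of $M(p)$ alone is not dominant, so one exploits uniform ellipticity of the full leading matrix. Under the strict inequalities $B^2-4AC>0$, $B^2-3AC>0$, or $B^2-3AC>0$ together with $C^2-3BD>0$ (as appropriate), the minimum eigenvalue of $M(p)$ restricted to the compact slice $K=\{y_1^2+y_2^2=1,\ y_i\geq 0\}$ is bounded below by a positive constant $\lambda$; homogeneity then yields $M(p)\geq (\lambda/R^2)\,\Id$ on any cone, with $R=\sqrt{y_1^2+y_2^2}$, and this uniformly dominates the perturbation. Infinite distance follows from $\int_\gamma dR/R=\infty$ along any curve leaving compact subsets of $S^\circ$.

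The main obstacle --- and precisely the reason the corollary is stated only under strict inequalities --- is the degenerate locus in the parameter space of candidates, namely $B^2-4AC=0$ in (vi), $B^2-3AC=0$ in (viii), and $B^2-3AC=0$ or $C^2-3BD=0$ in (ix). In these borderline cases $p$ acquires a real square or cube factor, the minimum eigenvalue of $M(p)$ on $K$ vanishes along the corresponding direction, and neither diagonal dominance nor uniform ellipticity is available. The refined grouping used for the $B^2-3AC=0$ subcase of Case (viii) inside Proposition \ref{prop} --- packaging $2ABy_2^4\,\Re(dz_1\otimes d\bar z_2)$ together with the diagonal terms $ACy_2^4|dz_1|^2$ and $3A^2 y_2^4|dz_2|^2$ into the perfect square $|\sqrt{AC}\,y_2^2\,dz_1+\sqrt{3}\,A\,y_2^2\,dz_2|^2$ before estimating --- disposes of that single equality case by hand but does not obviously generalize; treating these boundary cases uniformly appears to require genuine input from the polarization $\tilde Q$ beyond the leading polynomial $p$.
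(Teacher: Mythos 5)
Your proposal is correct and follows essentially the same route as the paper: AM--GM/Cauchy--Schwarz absorption of the off-diagonal and exponentially decaying perturbation terms into the dominant diagonal of $M(p)$ for Cases (ii)--(v) and (vii), and a uniform lower eigenvalue bound $M(p)\geq (\lambda/R^2)\,\mathrm{Id}$ on the compact slice $K$ for the homogeneous Cases (vi), (viii), (ix) under the strict inequalities. Your closing remark on the degenerate locus (e.g.\ $B^2-4AC=0$) correctly identifies why the corollary is stated only with strict inequalities, matching the paper's implicit restriction.
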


\begin{remark}
For the higher dimensional base $S$, a more detailed analysis of the bounded terms appearing in \eqref{y11} is needed. More precisely, for $j\geq 3$, 
\begin{align*}
-\p_1\p_{\bar j}\log\tilde{Q}=\frac{(\p_1\tilde{Q})(\p_{\bar j}\tilde{Q})-\tilde{Q}(\p_1\p_{\bar j}\tilde{Q})}{\tilde{Q}^2}.
\end{align*}
We do not have a good control on the growing/decaying rate of $y_2$-terms in $-\p_1\p_{\bar j}\log\tilde{Q}$ at this moment. Such constrains are closed related to the degeneration of mixed Hodge structures on further intersections of boundary divisors.  Results on degeneration of Hodge structures \cite{S1973}, \cite{K1981}, \cite{K2010} and \cite{CK1982} connect the Hodge structures (the open part of $S$) and limiting mixed Hodge structures (the $1$-boundary part of $S$) and yields the {\bf Observation} \ref{obv}.
\end{remark}

\begin{remark}\label{diam}
As we pointed out in introduction, the validity of {\bf Conjecture \ref{conj}} implies the diameter boundedness of the family $\mathfrak{X}/S$: If $\mathfrak{X}_s$ is at finite distance along a \emph{real curve} $\gamma$, the conjecture says that $N_iF^n_\infty(s)=0$ for all $i$. Then $\mathfrak{X}_s$ has finite distance along any holomorphic curve and hence $\mathfrak{X}/S$ has a uniform diameter bound \emph{over} $S$, in particular, over $\gamma$. 
\end{remark}

\begin{example}\label{rem1}
In general, we may regard the matrix form of \eqref{WPmetric} as a Hermitian perturbation of a given matrix \eqref{pert}. Consider 
\[
M=\frac{1}{y_1^2}\left[\begin{matrix}
1 & 0 \\
0 & 0 
\end{matrix}\right]~ \text{by a hermitian matrix}~E=\left[\begin{matrix}
0 & ie^{-y_2} \\
-ie^{-y_2} & e^{-2y_2} 
\end{matrix}\right]
\]
In this case, the Weil--Petersson distance is asymptotic to the integration of
\[
\frac{1}{y_1^2}\sqrt{|x_1'-e^{-y_2}y_2'|^2+|y_1'+e^{-y_2}x_2'|^2}.
\]
Consider the curve $\gamma$: $t\mapsto (C,t,-e^t,t)$. The second becomes zero since $dt-e^{-t}e^tdt=0$. Thus the distance is given by
\[
\frac{1}{y_1^2}\sqrt{|x_1'-e^{-y_2}y_2'|^2}
\]
and we have 
\[
\int_\gamma \frac{e^{-t}dt}{t}<\infty.
\]
Therefore, in general, we need more constrains on the potential function $\log\tilde{Q}$; that is, a more detailed information on the variation of mixed Hodge structures along the boundaries and its further degenerations.
\end{example}

\subsubsection{Applications on two parameter families}
Explicit examples on two parameter family of Calabi--Yau threefolds were considered by \cite{COFKM1994} and \cite{HKTY1995}. The Weil--Petersson distance can be described using the results in this paper. As an application, we focus on the example $\mathbb{P}(1,1,2,2,2)[8]$, degree $8$ hypersurfaces in the weighted projective space.

We follow the notations in \cite{COFKM1994}. The $1$-boundary parts are completely determined by {\bf Theorem} \ref{thm02} (cf.~Figure 1 in \cite{COFKM1994}). The divisors $C_1$ and $C_{con}$ (cf.~Figure 1 and {\bf Section 4} in \cite{COFKM1994}) are finite divisors. Indeed, the corresponding logarithmic part of the monodromies $N_1$ and $N_{con}$ are of rank one. So the intersection $C_1\cap C_{con}$ is at finite distance place. 

For the other intersection points, we look at the blown up moduli (cf.~Figure 4 in \cite{COFKM1994}). $D_{(0,-1)}$ and $D_{(-1,-1)}$ are divisors with nilpotent indices $4$. The points on them are maximal nilpotency points because they are Calabi--Yau threefolds. They are infinite divisors. Hence $C_\infty=D_{(1,0)}$ is an infinite divisor since the blow-ups of the intersection of finite divisors are still finite.

The intersection $D_{(0,-1)}\cap C_\infty$ is at infinite distance, since it is of the type $(3,1)$ in {\bf Corollary \ref{cor2}}. The intersection $D_{(0,-1)}\cap C_1$ is at infinite distance place \emph{along the angular slices}. The intersection $D_{(0,-1)}\cap D_{(-1,-1)}$ has infinite distance \emph{measured by the dominant term of the candidates of the Weil--Petersson potentials}. We point out that the logarithmic part of the monodromies around $D_{(0,-1)}$ and $D_{(-1,-1)}$ are proportional. It is the {\bf Case (ix)} with all equalities hold.

\end{document}